\tikzset{>=stealth'}
\def\arrowLengthDisplayStyle{4ex}
\def\arrowHeightDisplayStyle{.8ex}
\def\arrowSkipDisplayStyle{.5ex}
\def\arrowLengthTextStyle{3ex}
\def\arrowHeightTextStyle{.8ex}
\def\arrowSkipTextStyle{.4ex}
\def\arrowLengthScriptStyle{2.5ex}
\def\arrowHeightScriptStyle{.6ex}
\def\arrowSkipScriptStyle{.3ex}
\def\arrowLengthScriptScriptStyle{2ex}
\def\arrowHeightScriptScriptStyle{.4ex}
\def\arrowSkipScriptScriptStyle{.2ex}
\renewcommand{\to}{\arrow{->}}
\newcommand{\embed}{\arrow{right hook->}}
\renewcommand{\mapsto}{\arrow{|->}}
\newcommand{\MakeTikzArrowWithSuperscriptSubscript}[4]
 {
  \mathchoice
   { 
    \hspace*{\arrowSkipDisplayStyle}
    \begin{tikzpicture}[baseline]
     \draw [#1] (0,\arrowHeightDisplayStyle) -- node [above] {$#2$} node [below] {$#3$} (#4 * \arrowLengthDisplayStyle, \arrowHeightDisplayStyle);
    \end{tikzpicture}
    \hspace*{\arrowSkipDisplayStyle}
   }
   { 
    \hspace*{\arrowSkipTextStyle}
    \begin{tikzpicture}[baseline]
     \draw [#1] (0,\arrowHeightTextStyle) -- node [above] {$\scriptstyle #2$} node [below] {$\scriptstyle #3$} (#4 * \arrowLengthTextStyle, \arrowHeightTextStyle);
    \end{tikzpicture}
    \hspace*{\arrowSkipTextStyle}
   }
   { 
    \hspace*{\arrowSkipScriptStyle}
    \begin{tikzpicture}[baseline]
     \draw [#1] (0,\arrowHeightScriptStyle) -- node [above] {$\scriptscriptstyle #2$} node [below] {$\scriptscriptstyle #3$} (#4 * \arrowLengthScriptStyle, \arrowHeightScriptStyle);
    \end{tikzpicture}
    \hspace*{\arrowSkipScriptStyle}
   }
   { 
    \hspace*{\arrowSkipScriptScriptStyle}
    \begin{tikzpicture}[baseline]
     \draw [#1] (0,\arrowHeightScriptScriptStyle) -- node [above] {$\scriptscriptstyle #2$} node [below] {$\scriptscriptstyle #3$} (#4 * \arrowLengthScriptScriptStyle, \arrowHeightScriptScriptStyle);
    \end{tikzpicture}
    \hspace*{\arrowSkipScriptScriptStyle}
   }
 }
\newcommand{\MakeTikzArrowWithCentralLabel}[3]
 {
  \mathchoice
   { 
    \hspace*{\arrowSkipDisplayStyle}
    \begin{tikzpicture}[baseline]
     \draw [#1] (0,\arrowHeightDisplayStyle) -- node [fill=white,inner sep=1pt] {$#2$} (#3 * \arrowLengthDisplayStyle, \arrowHeightDisplayStyle);
    \end{tikzpicture}
    \hspace*{\arrowSkipDisplayStyle}
   }
   { 
    \hspace*{\arrowSkipTextStyle}
    \begin{tikzpicture}[baseline]
     \draw [#1] (0,\arrowHeightTextStyle) -- node [fill=white,inner sep=1pt] {$\scriptstyle #2$} (#3 * \arrowLengthTextStyle, \arrowHeightTextStyle);
    \end{tikzpicture}
    \hspace*{\arrowSkipTextStyle}
   }
   { 
    \hspace*{\arrowSkipScriptStyle}
    \begin{tikzpicture}[baseline]
     \draw [#1] (0,\arrowHeightScriptStyle) -- node [fill=white,inner sep=1pt] {$\scriptscriptstyle #2$} (#3 * \arrowLengthScriptStyle, \arrowHeightScriptStyle);
    \end{tikzpicture}
    \hspace*{\arrowSkipScriptStyle}
   }
   { 
    \hspace*{\arrowSkipScriptScriptStyle}
    \begin{tikzpicture}[baseline]
     \draw [#1] (0,\arrowHeightScriptScriptStyle) -- node [fill=white,inner sep=1pt] {$\scriptscriptstyle #2$} (#3 * \arrowLengthScriptScriptStyle, \arrowHeightScriptScriptStyle);
    \end{tikzpicture}
    \hspace*{\arrowSkipScriptScriptStyle}
   }
 }
\def\arrow#1{\def\lastArrowStyle{#1}
             \futurelet\testchar\arrowMaybeStreched}
\def\arrowMaybeStreched{\ifx[\testchar \let\next\arrowStreched
                         \else \let\next\arrowUnstreched \fi
                        \next}
\def\arrowStreched[#1]{\def\lastArrowStrech{#1}
                       \futurelet\testchar\arrowMaybeLabel}
\def\arrowUnstreched{\def\lastArrowStrech{1}
                     \futurelet\testchar\arrowMaybeLabel}
\def\arrowMaybeLabel{\ifx^\testchar \let\next\arrowSuperscript
                      \else \ifx_\testchar \let\next\arrowSubscript
                             \else \ifx~\testchar \let\next\arrowCentralLabel
                                    \else \let\next\arrowNoLabel
                                   \fi
                            \fi
                     \fi
                     \next}
\def\arrowSuperscript^#1{\def\lastArrowSuperscript{#1}
                         \futurelet\testchar\arrowSuperMaybeSub}
\def\arrowSuperMaybeSub{\ifx_\testchar \let\next\arrowSuperscriptSubscript
                         \else \let\next\arrowSuperscriptNoSubscript \fi
                        \next}
\def\arrowSubscript_#1{\def\lastArrowSubscript{#1}
                         \futurelet\testchar\arrowSubMaybeSuper}
\def\arrowSubMaybeSuper{\ifx^\testchar \let\next\arrowSubscriptSuperscript
                         \else \let\next\arrowSubscriptNoSuperscript \fi
                        \next}
\def\arrowSuperscriptSubscript_#1{\def\lastArrowSubscript{#1}
                                  \arrowDrawSupSub}
\def\arrowSuperscriptNoSubscript{\def\lastArrowSubscript{}
                                 \arrowDrawSupSub}
\def\arrowSubscriptSuperscript^#1{\def\lastArrowSuperscript{#1}
                                  \arrowDrawSupSub}
\def\arrowSubscriptNoSuperscript{\def\lastArrowSuperscript{}
                                 \arrowDrawSupSub}
\def\arrowNoLabel{\def\lastArrowSuperscript{}
                  \def\lastArrowSubscript{}
                  \arrowDrawSupSub}
\def\arrowCentralLabel~#1{\MakeTikzArrowWithCentralLabel{\lastArrowStyle}{#1}{\lastArrowStrech}}
\def\arrowDrawSupSub{\MakeTikzArrowWithSuperscriptSubscript{\lastArrowStyle}{\lastArrowSuperscript}{\lastArrowSubscript}{\lastArrowStrech}}
\newtheorem{theorem}{Theorem}
\newtheorem{lemma}[theorem]{Lemma}
\newtheorem{proposition}[theorem]{Proposition}
\theoremstyle{definition}
\newtheorem{construction}[theorem]{Construction}
\newtheorem{definition}[theorem]{Definition}
\newtheorem{example}[theorem]{Example}
\newtheorem{remark}[theorem]{Remark}
\DeclareMathOperator{\End}{End}
\DeclareMathOperator{\id}{id}
\DeclareMathOperator{\op}{op}
\newcommand{\iso}{\cong}
\renewcommand{\geq}{\geqslant}
\DeclareMathOperator{\Cone}{Cone}
\newcommand{\R}{\mathbf{R}\!}
\newcommand{\tail}{{\rm t}}
\newcommand{\start}{{\rm s}}
\DeclareMathOperator{\red}{red}
\DeclareMathOperator{\dec}{dec}
\renewcommand{\H}{\operatorname{H}}
\newcommand{\m}{_{\rm M}}
\newcommand{\ci}[1]{\chi(#1)}
\newcommand{\st}[1]{\operatorname{s}_{#1}}
\newcommand{\ta}[1]{\operatorname{t}_{#1}}
\title{Quivers for silting mutation}
\author[Oppermann]{Steffen Oppermann}
\thanks{This project began in discussion with Hugh Thomas, and much of the work presented here was done when visiting UNB. I would like to thank Hugh for inviting me to Fredericton and for the interesting discussions we had there.}
\thanks{Supported by NRF grant 221893.}
\address{Steffen Oppermann \\ Department of Mathematical Sciences \\ NTNU \\ 7491 Trondheim \\ Norway}
\numberwithin{theorem}{section}
\begin{document}

\maketitle

\begin{abstract}
We give a combinatorial mutation rule for Aihara's and Iyama's silting mutation.

As an application, we reprove Keller-Yang's mutation rule for Ginzburg algebras, and obtain an analog of that rule for arbitrary dimension.
\end{abstract}

\section{introduction}

Keller and Vossieck introduced the notion of \emph{silting objects} in \cite{keller-vossieck}, as a means to studying aisles in derived categories. These are objects generating the triangulated category, such that all positive self-extensions vanish. One may note that, by Keller's version of the derived Morita theorem (see \cite{keller-derived-morita}), if one has a silting object $T$ in an algebraic triangulated category, then this triangulated category is equivalent to the perfect complexes over the derived endomorphism ring of $T$.

Aihara and Iyama \cite{aihara-iyama}, inspired by cluster mutation, introduced a mutation procedure for silting objects: They remove a summand from the silting object, and replace it by a different one, using an approximation by the other summands.

In \cite{buan-reiten-thomas}, it is proven that certain silting objects correspond to cluster tilting objects in (higher) cluster categories, and moreover that this identification translates silting mutation to cluster tilting mutation. 

\medskip
Our aim here is to give a combinatorial mutation rule for silting mutation. We assume the dg-algebra we start with is given as a quiver (with arrows in all non-negative degrees) -- see Construction~\ref{const.usual_algebra} for a justification of this assumption. Then we show:

\begin{theorem} \label{thm.main_result}
Let $(kQ, d)$ be a dg quiver algebra, concentrated in (homologically) non-negative degree. Let $i$ be a vertex of $Q$ with no loops of degree $0$ attached to it.

Then the derived endomorphism ring of the left Aihara-Iyama silting mutation of $kQ$ at $i$ is given as the mutated quiver $(kM, \partial)$ as described in Section~\ref{sec.mutation}.
\end{theorem}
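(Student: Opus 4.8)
The plan is to compute the derived endomorphism dg algebra of the left Aihara--Iyama mutation $T'$ of $kQ$ at $i$ from an explicit model, and then to match it, generator by generator, with the combinatorial datum $(kM,\partial)$ of Section~\ref{sec.mutation}. Write $kQ=\bigoplus_{p}P_p$ for the decomposition into indecomposable summands, so that $\R\End_{kQ}(kQ)$ is literally $(kQ,d)$, presented as a quiver algebra with its differential. By definition the left mutation at $i$ is $P_i^{\ast}=\Cone(P_i\xrightarrow{f}E_i)$, sitting in a triangle $P_i\xrightarrow{f}E_i\to P_i^{\ast}\to P_i[1]$ with $f$ a minimal left $\add(\bigoplus_{p\ne i}P_p)$-approximation, and $T'=P_i^{\ast}\oplus\bigoplus_{p\ne i}P_p$. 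The first step is to make $f$ explicit. Since $kQ$ is concentrated in non-negative homological degree and, after the normalisation of Construction~\ref{const.usual_algebra}, no arrow lies in the image of $d$, every degree-$0$ arrow out of $i$ is a cocycle and none is a boundary; using moreover that $i$ carries no degree-$0$ loop, so these arrows all have target $\ne i$, the usual radical argument identifies $E_i=\bigoplus_{b}P_{t(b)}$, the sum over the degree-$0$ arrows $b$ starting at $i$, with $f$ given by (right) multiplication by the $b$. Thus $P_i^{\ast}$ is the two-term complex $[P_i\xrightarrow{f}E_i]$ (with $E_i$ in degree $0$), and $T'$ becomes an explicit bounded complex of dg-projective $kQ$-modules.

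Next I would compute $B:=\End_{C(kQ)}(T')$, an honest dg algebra quasi-isomorphic to $\R\End(T')$ because every summand of $T'$ is a bounded complex of dg-projectives. As a graded algebra $B$ is the block-matrix algebra over the Hom-complexes $\Hom(P_p,P_q)=e_q(kQ)e_p$, with the blocks involving $P_i^{\ast}$ further expanded according to its two-term model, and its differential is the internal $d$ on each block, corrected by the commutator with the component $f$ of the differential of $T'$. Three blocks are genuinely new, namely $\Hom(P_i^{\ast},P_p)$, $\Hom(P_p,P_i^{\ast})$ and $\Hom(P_i^{\ast},P_i^{\ast})$, and applying $\Hom(P_i^{\ast},-)$ and $\Hom(-,P_i^{\ast})$ to the mutation triangle expresses all three, as well as the compositions linking them to the untouched blocks $e_q(kQ)e_p$ ($p,q\ne i$), in terms of the data of $Q$. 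Reading off a dg quiver presentation, the arrows are: the arrows of $Q$ not incident to $i$; the reversals $b^{\ast}$ of the degree-$0$ arrows $b$ out of $i$ occurring in $f$, produced by the structure map $E_i\to P_i^{\ast}$; the arrows incident to $i$ not occurring in $f$, now carrying a homological degree shift governed by the connecting map $P_i^{\ast}\to P_i[1]$; and, for each pair of arrows $j\xrightarrow{a}i\xrightarrow{b}k$, a composite arrow $j\to k$ of degree $\deg a+\deg b$ surviving inside the untouched block $e_k(kQ)e_j$. This is exactly the arrow set of $M$, with the degrees predicted by the construction, and the differential that $B$ induces on these arrows — combining $d$, the twist $f$, and the connecting map — is exactly $\partial$; in particular $\partial$ is typically nonzero even when $d=0$, since the arrows coming from arrows into $i$ need not be cocycles in $B$ and $\partial$ binds them to the composites through $i$.

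To upgrade this match to a proof I would define a homomorphism of dg algebras $\Phi\colon(kM,\partial)\to B$ sending the idempotents to $\id_{P_p}$ ($p\ne i$) and $\id_{P_i^{\ast}}$, and each arrow of $M$ to the explicit cochain of $B$ identified above; one then checks $\Phi\partial=d_B\Phi$ on arrows, extends multiplicatively, and verifies that $\Phi$ is a quasi-isomorphism by computing $H_{\ast}(B)=\bigoplus_{p,q,n}\Hom_{D(kQ)}(T'_p,T'_q[n])$ from the mutation triangle and checking that $\Phi$ induces an isomorphism on homology. A single such quasi-isomorphism already identifies $\R\End(T')$ with $(kM,\partial)$ in the homotopy category of dg algebras, which is the assertion.

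The step I expect to be the main obstacle is the second paragraph: reading off from the triangles the new arrows and the composites through $i$ together with their precise homological degrees and the differential $B$ induces on them, with the correct signs, in the presence of higher-degree arrows at $i$ and of a possibly non-formal $(kQ,d)$ — equivalently, checking $\Phi\partial=d_B\Phi$ on the nose for the combinatorially defined $\partial$. The hypothesis that $i$ has no degree-$0$ loop is used precisely at this point: it ensures $E_i\in\add(\bigoplus_{p\ne i}P_p)$, prevents $P_i^{\ast}$ from acquiring a new degree-$0$ endomorphism, and hence keeps the output within the class of non-negatively graded dg quiver algebras for which $(kM,\partial)$ is defined.
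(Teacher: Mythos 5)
Your overall strategy---realise the left mutation as the explicit cone $P_i^{\ast}=\Cone(P_i\to\bigoplus_{\alpha\in A}P_{\tail(\alpha)})$, compute the honest endomorphism dg algebra $B$ of the new complex, and compare it with $(kM,\partial)$---is exactly the paper's strategy, and your first paragraph (identification of the approximation, role of the no-loop hypothesis in keeping $E_i$ in $\add(\bigoplus_{p\neq i}P_p)$) is fine. The gaps are in the comparison step, at precisely the two points where the paper has to work. First, the arrow set you read off is \emph{not} the arrow set of $M$: Section~\ref{sec.mutation} also introduces the anti-composition arrows $\varphi\alpha^{-1}$ (Step 3) and $\alpha\varphi\beta^{-1}$ (Step 4), and these correspond to no generator of $B$. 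The point is that $kM$ is a free graded path algebra, whereas the endomorphism algebra of the mutated complex is a quiver \emph{with relations} coming from the split structure of the cone (schematically $\varphi\alpha^{\ast}=0$, which after the reduction below becomes $\varphi\alpha^{\ast}=-(-1)^{|\varphi|}(d\varphi)/\alpha$ and $\alpha\varphi\beta^{\ast}=-(-1)^{|\varphi|}\alpha(d\varphi)/\beta$). The extra arrows of $M$ exist to resolve these relations, and the identification is a quasi-isomorphism $kM\to B$ obtained by killing a contractible dg ideal, not a bijection of arrow sets; a ``generator by generator'' match as you describe cannot be set up.

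Second, the honest complex $B$ is \emph{not} non-negatively graded: since $P_i^{\ast}\cong P_i[1]\oplus\bigoplus_{\alpha\in A}P_{\tail(\alpha)}$ as graded modules, each $\alpha\in A$ produces a morphism $\overline{\alpha}=\alpha E\in\Hom(P_i^{\ast},P_{\tail(\alpha)})$ of degree $-1$, parallel to the degree-$0$ split epimorphism $\alpha_{\ast}$ with $d\alpha_{\ast}=-\overline{\alpha}$. Your closing remark that the hypothesis on $i$ ``keeps the output within the class of non-negatively graded dg quiver algebras'' assumes this problem away; only the homology of $B$ is non-negative. Eliminating these degree $-1$ generators is delicate: one cannot factor out the ideal they generate (the relation $\alpha_{\ast}\alpha^{\ast}=\id$ obstructs this), and the graded subalgebra of paths avoiding $\overline{\alpha}$ and $\alpha_{\ast}$ is not closed under $d$. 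The paper's fix is the perturbed generators $\widehat{p}=p+(-1)^{|p|}\sum_{\alpha\in A}(dp)/\overline{\alpha}\;\alpha_{\ast}$, which span a non-negatively graded dg subalgebra $\Gamma_{\rm red}$ with acyclic quotient; this perturbation is also what turns the relation $\varphi\alpha^{\ast}=0$ into the nonzero relation quoted above. Finally, your plan to verify quasi-isomorphy by computing homology of both sides is weaker than what is needed, since $H_{\ast}(kM,\partial)$ is not independently accessible; the paper instead produces a chain of maps each of which is a quasi-isomorphism for an explicit structural reason.
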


An obvious dual involving right silting mutation also holds. However, since this is the exact same statement for the opposite quiver, we restrict to considering left silting mutation here.

\medskip
While the mutation rule may seem combinatorially quite involved, we illustrate in Section~\ref{sec.examples} on some small examples that it is readily applicable.

In Section~\ref{sec.ginzburg} we apply our mutation rule to Ginzburg dg-algebras, obtaining immediately the result of Keller and Yang \cite{keller-yang} that the silting mutation is precisely the Ginzburg dg-algebra of the mutated quiver.

We then proceed (in Section~\ref{sec.higher_ginz}) to generalize this result to arbitrary dimension. In \cite{van_den_bergh}, Van den Bergh proves that (under certain assumptions) any Calabi-Yau algebra is quasi-isomorphic to one given by a higher quiver with potential. Here we obtain a mutation rule for these higher dimensional quivers with potential, which we show to coincide with the endomorphism ring of a silting object.

\medskip
Combinatorial mutation rules are already known for cluster categories (and various generalizations). We hope that future work will show that these cluster mutation rules are (in some sense) induced by the silting mutation rule presented here.

\section{Setup} \label{sect.setup}

Throughout this paper, we denote by $\Lambda$ a (homologically) non-negatively graded dg-algebra over $R = k^n$, where $k$ is a field. We assume moreover that, as graded algebra, $\Lambda$ is free on a degree-wise finite set of generators, that is $\Lambda \iso {\mathrm T}_R A$ for some degree-wise finite dimensional $R$-$R$-bimodule $A$.

Choosing generators for $A$, we see that $\Lambda$ is given as a quiver algebra $kQ$ (with arrows allowed in all non-negative degrees), together with a differential. We will denote the arrows of $Q$ by small greek letters $\varphi$, $\psi$, \dots, and their degrees by $|\varphi|$, $|\psi|$, \dots . It clearly suffices to know the differential $d$ on the arrows -- its value on paths can then be calculated using the graded Leibnitz rule.

\begin{example}
Let $A$ be a basic finite dimensional hereditary $k$-algebra over an algebraically closed field $k$. Then $\Lambda=A$, considered as dg-algebra concentrated in degree $0$, satisfies our assumptions.
\end{example}

\begin{construction} \label{const.usual_algebra}
Let $A$ be any basic finite dimensional $k$-algebra over an algebraically closed field $k$. Then $A$ is given by a quiver with relations, say $A = k Q^{(0)} / (R)$. One may pick a minimal set of relations, and consider $Q^{(1)}$ given by adding to $Q^{(0)}$ arrows in degree $1$ corresponding to these relations, and a differential, such that $\H_0(kQ^{(1)}) = A$. Now pick a generating set for $\H_1(kQ^{(1)})$, and add arrows in degree $2$ killing these generators in homology. This gives a new quiver $Q^{(2)}$ with differential, such that $\H_1(kQ^{(2)}) = 0$ and $\H_0(kQ^{(2)}) = A$. Iterating this one obtains a quiver $Q$ such that $kQ$ is quasi-isomorphic to $A$.
\end{construction}

\begin{example} \label{ex.A_3/rad^2}
Let $A = k[1 \to^{\varphi} 2 \to^{\psi} 3] / (\psi \varphi)$. Then $A$ is quasi-isomorphic to
\[ \Lambda = k[
\begin{tikzpicture}[baseline=-3.5pt]
 \node (1) at (0,0) {$1$};
 \node (2) at (.8,0) {$2$};
 \node (3) at (1.6,0) {$3$};
 \draw [->] (1) to node [above] {$\scriptstyle \varphi$} (2);
 \draw [->] (2) to node [above] {$\scriptstyle \psi$} (3);
 \draw [->, bend right=30] (1) to node [below] {$\scriptstyle \omega$} (3); 
\end{tikzpicture} ], \]
where $\omega$ is in degree $1$ and $d \omega = \psi \varphi$.
\end{example}

\begin{remark}
Let $\mathscr{T}$ be an algebraic triangulated category, and $S \in \mathscr{T}$ a silting complex. Then $\R\End_{\mathscr{T}}(S)$ is a non-negatively graded dg-algebra. However it is not clear how to turn this dg-algebra into a quiver in a reasonably small way. (Similar to Construction~\ref{const.usual_algebra} one may start with a tensor algebra and kill everything superfluous, however this typically gives results that are too big to control.)
\end{remark}

\section{Mutation} \label{sec.mutation}

In this section, we describe the mutation procedure on a graded quiver with differential. Let $Q$ be as in Section~\ref{sect.setup}, and $i \in \{1, \ldots, n\}$ a vertex of $Q$ without loops of degree $0$ attached to it. We construct a new quiver $M$. Since many of the arrows will have the same names as arrows of $Q$, we denote the degrees of these in $M$ by $\| \cdot \|$, and the differential by $\partial$.

The main result of this paper (Theorem~\ref{thm.main_result}), which we prove in Section~\ref{sec.proof}, is that the quiver $M$ described here is in fact the silting mutation of the quiver $Q$ we started with.

We first describe the quiver $M$. While the description of $M$ itself is fairly simple, the description of the differential is somewhat involved, and in particular it seems non-trivial that the formulas actually define a differential (i.e.\ that the candidate differential squares to $0$).

We denote by $A$ the collection of all arrows of degree $0$ which start in $i$. (Recall that we assume that none of these arrows is a loop.) For the arrows in $A$ we will use greek letters from the beginning of the alphabet (mostly $\alpha$, sometimes $\beta$).

Since we often have to distinguish according to whether a given vertex is $i$ or not, we denote by $\ci{\cdot}$ the characteristic function of $i$, that is
\[ \ci{j} = \begin{cases} 1 & \text{if } j = i \\ 0 & \text{if } j \neq i \end{cases}. \]

\subsection*{Step 1: Rotation of arrows}

We increase the degree of all arrows into $i$, and decrease the degree of all arrows out of $i$. In particular all loops in $i$ keep their degree. Note that now precisely the arrows in $A$ have been assigned degree $-1$. We replace these arrows $\alpha$ by arrows $\alpha^*$ of degree $0$ in the opposite direction.

In short terms, we have arrows:
\begin{align*}
& \varphi \colon j \to[.6] j' && \text{for } \varphi \colon j \to[.6] j' \in Q_1 \setminus A && \| \varphi \| = |\varphi| - \ci{\st{\varphi}} + \ci{\ta{\varphi}}; \\
& \alpha^* \colon j \to[.6] i && \text{for } \alpha \colon i \to[.6] j \in A && \| \alpha^* \| = 0. \\
\end{align*}

\subsection*{Step 2: Composition arrows}

As the reader familiar with cluster theory might expect, we need to add new arrows for ``broken compositions'', that is we introduce arrows $\alpha \varphi$ for any $\varphi$ ending in $i$ and $\alpha \in A$.

That is, we have
\begin{align*}
 & \alpha \varphi \colon j \to[.6] j' && \text{for } \varphi \colon j \to[.6] i \in Q_1, \alpha \colon i \to[.6] j' \in A && \| \alpha \varphi \| = \| \varphi \| - 1. \\
\end{align*}

Note that using the name ``$\alpha \varphi$'' is not ambiguous, since $\alpha$ does not appear on its own in the mutated quiver.

\subsection*{Step 3: Anti-compositions}

Symmetric to the above, we also need to deal with ``new'' compositions, that only came into existence because we turned arrows. This is done by introducing the following arrows.
\begin{align*}
& \varphi \alpha^{-1} \colon j \to[.6] j' && \text{for } \varphi \colon i \to[.6] j'  \in Q_1 \setminus A, \alpha \colon i \to[.6] j \in A && \| \varphi \alpha^{-1} \| = \| \varphi \| + 1 \\ 
\end{align*}

\subsection*{Step 4: In case of cycles at $i$}

In case there are cycles in $i$ we need to combine the two constructions from Steps~3 and 4 above introduce the arrows
\begin{align*}
& \alpha \varphi \beta^{-1} \colon j \to[.6] j' && \text{for } \varphi \colon i \to[.6] i \in Q_1, \alpha \colon i \to[.6] j', \beta \colon i \to[.6] j \in A && \| \alpha \varphi \beta^{-1} \| = \| \varphi \|.
\end{align*}

In other words, paths in $M$ are just composable sequences of arrows of $Q$, plus arrows $\alpha^*$ and $\alpha^{-1}$ for $\alpha \in A$, subject to the rules that
\begin{itemize}
\item an arrow $\alpha \in A$ must always be preceded by an arrow $\varphi \in Q_1$;
\item an arrow $\alpha^{-1}$ must always be followed by an arrow $\varphi \in Q_i \setminus A$.  
\end{itemize}

\subsection*{The differential}

We denote the differential for $M$ by $\partial$. To define it, we will need the following constructions on paths in $M$.

\begin{construction} \label{const.red_dec}
The \emph{reduction} of a (linear combination of) path(s), denoted by $\red(p)$, is obtained by just removing all paths that start in an arrow $\alpha \in A$ (i.e.\ where this arrow is the rightmost one). Thus $\red$ defines a (non-degree preserving) map from $kQ$ to $kM$.

In the other extreme, we denote by $p / \alpha$ the linear combination obtained by just remembering paths that start with $\alpha$, and then removing the $\alpha$.

That is
\[ p = \red(p) + \sum_{\alpha \in A} p/\alpha \; \alpha. \]

\medskip
The \emph{decorated} version of a path $p$, denoted by $\dec(p)$, is obtained by introducing the symbol $\Delta = 1 - (\sum_{\alpha \in A} \alpha^{-1} \alpha)$ wherever this is allowed - that is whenever the path passes through vertex $i$, coming in through an arrow in $Q_1$ and leaving through an arrow in $Q_1 \setminus A$.
\end{construction}

\begin{definition} \label{def.new_diff}
We define $\partial$ by
\begin{align*}
\partial(\varphi) & = \begin{cases} \dec( \red( d \varphi)) & \text{ if } \ta{\varphi} \neq i \\ \sum_{\alpha \in A} \alpha^* \alpha \varphi - \dec( \red( d \varphi)) & \text{ if } \ta{\varphi} = i \end{cases} \\
\partial(\alpha \varphi) & = \dec( \alpha \red ( d \varphi)) \\
\partial(\alpha^*) & = 0 \\
\partial( \varphi \alpha^{-1}) & = \partial(\varphi) \alpha^{-1} + (-1)^{\| \varphi \|} \varphi \alpha^*  - (-1)^{\ci{ \ta{\varphi}}} \dec( d\varphi / \alpha) \\
\partial(\alpha \varphi \beta^{-1}) & = \partial(\alpha \varphi) \beta^{-1} + (-1)^{\| \alpha \varphi \|} \alpha \varphi \beta^* - \alpha \dec( d \varphi / \beta )
\end{align*}
The map $\partial$ is extended to (linear combinations of) paths using linearity and the Leibnitz rule.
\end{definition}

It is not clear a priori that this defines a differential, i.e.\ that $\partial^2 = 0$. We check this in Section~\ref{sec.is_diff}.

\begin{theorem}[{See Theorem~\ref{thm.is_diff}}]
$\partial^2 = 0$.
\end{theorem}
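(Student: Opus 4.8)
The plan is to verify $\partial^2 = 0$ by a direct (if lengthy) computation on generators, organized by the type of arrow of $M$ and further by the shape of the relevant paths of $Q$, and then to extend to all of $kM$ by the Leibniz rule (which is automatic once $\partial^2$ vanishes on generators, since $\partial^2$ is a degree-$2$ derivation and therefore determined by its values on arrows). So the real content is: check $\partial^2(\varphi) = 0$ for $\varphi \in Q_1 \setminus A$, check $\partial^2(\alpha\varphi) = 0$, $\partial^2(\varphi\alpha^{-1}) = 0$, $\partial^2(\alpha\varphi\beta^{-1}) = 0$, and note $\partial^2(\alpha^*) = 0$ trivially.

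The main technical device will be a bookkeeping lemma relating the three operations $\red$, $\dec$, and ``$p \mapsto p/\alpha$'' to the differential $d$ of $Q$. Concretely, I would first record how $\red$ interacts with $d$: since $d^2 = 0$ in $kQ$ and $\red$ is just ``drop the paths beginning with some $\alpha \in A$'', one gets an identity of the form $\red(d(\red(d\varphi))) + \sum_{\alpha} (\text{correction involving } d\varphi/\alpha) = 0$, the correction being exactly the paths that were discarded by the inner $\red$ but which $d$ can bring back. This is where the $p/\alpha\,\alpha$ decomposition $p = \red(p) + \sum_\alpha (p/\alpha)\,\alpha$ from Construction~\ref{const.red_dec} does the real work. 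Second, I would check that $\dec$ is compatible with $\partial$ in the sense that $\partial \circ \dec = \dec \circ (\text{something built from } d)$ on the relevant paths — the point being that inserting $\Delta = 1 - \sum_\alpha \alpha^{-1}\alpha$ at vertex $i$ and then applying $\partial$ produces, via the $\varphi\alpha^{-1}$ and $\alpha\varphi\beta^{-1}$ formulas, precisely the terms $\varphi\alpha^*$, $\alpha\varphi\beta^*$ that are needed to cancel the ``new'' terms $\sum_\alpha \alpha^*\alpha\varphi$ appearing in $\partial(\varphi)$ when $\ta\varphi = i$. In effect $\Delta$ is a homotopy-theoretic placeholder for the fact that $\alpha^*$ is inverse-up-to-the-differential to $\alpha$ after mutation, and tracking $\Delta$ through $\partial$ is the heart of the argument.

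Having those two lemmas, the case analysis becomes a matter of expanding. For instance, for $\varphi$ with $\ta\varphi = i$, $\partial^2(\varphi) = \partial\!\left(\sum_\alpha \alpha^*\alpha\varphi\right) - \partial(\dec(\red(d\varphi)))$; the first summand expands via $\partial(\alpha^*) = 0$, the Leibniz rule, and $\partial(\alpha\varphi) = \dec(\alpha\red(d\varphi))$, while the second expands via the $\dec$-compatibility lemma, and the $\red$–$d$ identity forces everything to cancel, the leftover $\alpha^*\alpha(\cdots)$ terms being exactly absorbed by the sign conventions $(-1)^{\|\varphi\|}$, $(-1)^{\ci{\ta\varphi}}$ in the definition of $\partial$ on $\varphi\alpha^{-1}$. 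The cycle case (Step~4 arrows $\alpha\varphi\beta^{-1}$) is the same computation with an extra $\alpha$ on the left and a $\beta^{-1}$ on the right, so once the non-cycle cases are done it should follow with only notational overhead.

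The hard part will be getting the signs right and, relatedly, making precise the $\dec$-compatibility statement: $\dec$ is defined somewhat informally (``introduce $\Delta$ wherever allowed''), so I expect the main obstacle to be setting up notation that makes ``$\partial$ commutes with $\dec$ up to the boundary terms $\varphi\alpha^*$'' into a clean, checkable identity — probably by writing a path through $i$ as a concatenation $q \cdot \varphi$ (entering via $\varphi \in Q_1$) $\cdot\, r$ (leaving via $Q_1 \setminus A$) and analyzing how $\partial$ acts on the inserted factor $\Delta$ at each such junction, using that $\partial(\Delta) = -\sum_\alpha \partial(\alpha^{-1})\alpha$ collapses against $\partial(\varphi)$ on the outgoing side. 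Once that identity is nailed down the remaining verifications are mechanical, and I would present them grouped by arrow type with the two lemmas quoted, relegating the sign chase to a remark or a short appendix-style computation.
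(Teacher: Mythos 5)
Your plan is essentially the paper's own proof: reduce to arrows via the Leibniz rule, establish a path-level compatibility lemma showing $\partial(\dec p)$ (and $\partial(\dec(p)\alpha^{-1})$) obey the same formulas as on arrows — proved there by splitting a path as $p = p_2 p_1$ at a junction through $i$ and tracking the inserted $\Delta$ — and then conclude each case from $0 = d^2\varphi = d(\red(d\varphi)) + \sum_{\alpha} d\bigl((d\varphi/\alpha)\,\alpha\bigr)$, i.e.\ exactly the $\red$/$(-/\alpha)$ decomposition you identify. Both key lemmas and the case analysis you outline correspond precisely to the Lemma and Proposition in Section~\ref{sec.is_diff}, so the remaining work is only the sign chase you already flag.
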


\section{Examples} \label{sec.examples}

\begin{example} \label{example.APR_tilt}
Let $A = k[1 \to^{\varphi} 2 \to^{\psi} 3] / (\psi \varphi)$ as in Example~\ref{ex.A_3/rad^2}. Then we have seen that $A$ is quasi-isomorphic to the graded quiver with differential given by
\[ \begin{tikzpicture}[baseline=-3.5pt]
 \node (1) at (0,0) {$1$};
 \node (2) at (1,1) {$2$};
 \node (3) at (2,0) {$3$};
 \draw [->] (1) to node [above left=-2pt] {$\scriptstyle \varphi$} (2);
 \draw [->] (2) to node [above right=-2pt] {$\scriptstyle \psi$} (3);
 \draw [->, dotted, thick] (1) to node [below] {$\scriptstyle \omega$} (3); 
\end{tikzpicture} \]
where the dotted arrow $\omega$ is in degree $1$ and $d \omega = \psi \varphi$.

We silting (in fact tilting) mutate in $1$. Here $A = \{ \varphi\}$. According to our rule obtain the new quiver is
\[ \begin{tikzpicture}[baseline=-3.5pt]
 \node (1) at (0,0) {$1$};
 \node (2) at (1,1) {$2$};
 \node (3) at (2,0) {$3$};
 \draw [->] (2) to node [above left=-2pt] {$\scriptstyle \varphi^*$} (1);
 \draw [->] (2) to node [below left=-2pt] {$\scriptstyle \psi$} (3);
 \draw [->] (1) to node [below] {$\scriptstyle \omega$} (3);
 \draw [->, dotted, thick, bend left=30] (2) to node [above right=-2pt] {$\scriptstyle \omega \varphi^{-1}$} (3);
\end{tikzpicture} \]
with
\[ \partial( \omega \varphi^{-1} ) = \omega \varphi^* - \underbrace{\dec( \underbrace{d \omega}_{= \psi \varphi} / \varphi )}_{= \psi} = \omega \varphi^* - \psi. \]
Factoring out the dg-ideal $(\omega \varphi^{-1}, \partial(\omega \varphi^{-1}) )$ we obtain the quasi-isomorphic quiver
\[ \begin{tikzpicture}[baseline=-3.5pt]
 \node (1) at (0,0) {$1$};
 \node (2) at (1,1) {$2$};
 \node (3) at (2,0) {$3$};
 \draw [->] (2) to node [above left=-2pt] {$\scriptstyle \varphi^*$} (1);
 \draw [->] (1) to node [below] {$\scriptstyle \omega$} (3);
\end{tikzpicture} \]
(as the reader familiar with tilted algebras of type $A_3$ will have expected).
\end{example}

\begin{example}
Consider the quiver with relations
\[ \begin{tikzpicture}
 \node (1) at (0,2) {$1$};
 \node (2) at (1,1) {$2$};
 \node (3) at (2,2) {$3$};
 \node (4) at (3,1) {$4$};
 \node (5) at (2,0) {$5$};
 \draw (1) [->] to node [above right=-2pt] {\scriptsize $\alpha$} (2);
 \draw (2) [->] to node [above left=-2pt] {\scriptsize $\beta$} (3);
 \draw (3) [->] to node [above right=-2pt] {\scriptsize $\gamma$} (4);
 \draw (2) [->] to node [above right=-2pt] {\scriptsize $\delta$} (5);
 \draw (1) [dotted, thick] to (3);
 \draw (2) [dotted, thick] to (4);
\end{tikzpicture} \]

This is quasi-isomorphic to the dg quiver
\[ \begin{tikzpicture}
 \node (1) at (0,2) {$1$};
 \node (2) at (1,1) {$2$};
 \node (3) at (2,2) {$3$};
 \node (4) at (3,1) {$4$};
 \node (5) at (2,0) {$5$};
 \draw (1) [->] to node [above right=-2pt] {\scriptsize $\alpha$} (2);
 \draw (2) [->] to node [above left=-2pt] {\scriptsize $\beta$} (3);
 \draw (3) [->] to node [above right=-2pt] {\scriptsize $\gamma$} (4);
 \draw (2) [->] to node [above right=-2pt] {\scriptsize $\delta$} (5);
 \draw (1) [->,dotted, thick] to node [above] {\scriptsize $r$} (3);
 \draw (2) [->,dotted, thick] to node [above] {\scriptsize $s$} (4);
 \draw (1) [->,dashed,out=30,in=90] to node [above] {\scriptsize $x$} (4);
\end{tikzpicture} \]
with dotted arrows in degree $1$, the dashed arrow in degree $2$, and
\[ d(r) = \beta \alpha, \qquad d(s) = \gamma \beta, \qquad d(x) = s \alpha - \gamma r. \]
Mutating at vertex $2$, we have $A = \{ \beta, \delta \}$. Applying our mutation rule we obtain
\[ \begin{tikzpicture}
 \node (1) at (0,2) {$1$};
 \node (2) at (1,1) {$2$};
 \node (3) at (2,2) {$3$};
 \node (4) at (3,1) {$4$};
 \node (5) at (2,0) {$5$};
 \draw (1) [->,dotted,thick] to node [above right=-2pt] {\scriptsize $\alpha$} (2);
 \draw (3) [->] to node [above left=-2pt] {\scriptsize $\beta^*$} (2);
 \draw (3) [->] to node [above right=-2pt] {\scriptsize $\gamma$} (4);
 \draw (5) [->] to node [above right=-2pt] {\scriptsize $\delta^*$} (2);
 \draw (1) [->,dotted, thick] to node [above] {\scriptsize $r$} (3);
 \draw (2) [->] to node [above] {\scriptsize $s$} (4);
 \draw (1) [->,dashed,out=30,in=90] to node [above] {\scriptsize $x$} (4);
 \draw (1) [->,out=60,in=120] to node [above] {\scriptsize $\beta \alpha$} (3);
 \draw (1) [->,out=-90,in=180] to node [below left=-2pt] {\scriptsize $\delta \alpha$} (5);
 \draw (5) [->,dotted,thick] to node [below right=-2pt] {\scriptsize $s \delta^{-1}$} (4);
 \draw (3) [->,dotted,thick,out=30,in=60] to node [above right=-2pt] {\scriptsize $s \beta^{-1}$} (4);
\end{tikzpicture} \]
with differential given by
\begin{align*}
\partial( \alpha ) & = \beta^* \beta \alpha + \delta^* \delta \alpha \\
\partial(r) & = \beta \alpha \\
\partial(s) & = 0 \\
\partial(x) & = s \Delta \alpha - \gamma r = s \alpha - s \beta^{-1} \beta \alpha - s \delta^{-1} \delta \alpha - \gamma r \\
\partial(s \beta^{-1}) & = s \beta^* - \dec( ds / \beta) = s \beta^* - \gamma \\
\partial(s \delta^{-1}) & = s \delta^*
\end{align*}
As in Example~\ref{example.APR_tilt} above, we may divide out the ideals $(r, \partial(r))$ and $(s \beta^{-1}, \partial(s \beta^{-1}))$ without changing the algebra up to quasi-isomorphism. Thus we are left with
\[ \begin{tikzpicture}
 \node (1) at (0,2) {$1$};
 \node (2) at (1,1) {$2$};
 \node (3) at (2,2) {$3$};
 \node (4) at (3,1) {$4$};
 \node (5) at (2,0) {$5$};
 \draw (1) [->,dotted,thick] to node [above right=-2pt] {\scriptsize $\alpha$} (2);
 \draw (3) [->] to node [above left=-2pt] {\scriptsize $\beta^*$} (2);
 \draw (5) [->] to node [above right=-2pt] {\scriptsize $\delta^*$} (2);
 \draw (2) [->] to node [above] {\scriptsize $s$} (4);
 \draw (1) [->,dashed,out=30,in=90] to node [above] {\scriptsize $x$} (4);
 \draw (1) [->,out=-90,in=180] to node [below left=-2pt] {\scriptsize $\delta \alpha$} (5);
 \draw (5) [->,dotted,thick] to node [below right=-2pt] {\scriptsize $s \delta^{-1}$} (4);
\end{tikzpicture} \]
with
\[ \partial(x) = s \alpha - s \delta^{-1} \delta \alpha, \qquad \partial(\alpha) = \delta^* \delta \alpha, \qquad \partial( s \delta^{-1}) = s \delta^*. \]
(We may observe that this is, in fact, quasi-isomorphic to $A^{\operatorname{op}}$.)
\end{example}

\section{Ginzburg algebras} \label{sec.ginzburg}

In this section we apply our mutation rule to Ginzburg algebras, reproving a result of Keller and Yang \cite{keller-yang} which asserts that the silting mutation of Ginzburg algebras coincides with the Ginzburg algebra of the mutated quiver.

While all results in this section are contained in the corresponding results of the next section, we feel it is worth discussing the more classical case ($d=3$, in terms of the next section) here separately. Firstly, because the usual notation differs somewhat (here we have a clear distinction between ``original" and ``opposite" arrows, while the difference disappears in the next section), and secondly because some technicalities that appear in the general case do not appear here.

We start by recalling the relevant notation.
\begin{definition}[See \cite{derksen-weyman-zelevinsky}]
Let $Q$ be a quiver.

A \emph{potential} on $Q$ is an element $W \in \operatorname{HH}_0(kQ) = kQ / [kQ, kQ]$, which we may think of as a linear combination of cycles up to cyclic permutation.

For an arrow $\alpha$ of $Q$, the \emph{cyclic derivative} of a cycle $c$ is given as $\partial_{\alpha} c = \sum_{c = p \alpha q} qp$. Clearly this extends to linear combinations of cycles.

For a vertex $i$ of $Q$ without loops attached to it, the mutation $\mu_i(Q, W)$ is given by
\begin{itemize}
\item the quiver $\mu_i(Q)$ obtained by reversing all arrows starting or ending in $i$ (these new reversed arrows will be denoted by $\alpha^*$ if the original arrow was called $\alpha$) and adding compositions $[\beta \alpha]$ for any composition of arrows passing through the vertex $i$;
\item the potential
\[ \mu_i(W) = W + \sum_{\alpha \colon x \smash{\to} i} \sum_{\beta \colon i \smash{\to} y} [\beta \alpha] \alpha^* \beta^*. \]
\end{itemize}
\end{definition}

\begin{definition}[See \cite{ginzburg}]
The \emph{Ginzburg algebra} associated to a quiver with potential $(Q, W)$ is the dg-algebra given as the quiver
\begin{align*}
Q^{\rm Gin} & = Q \cup \{ \alpha^{\op} \colon j \to i \mid \alpha \colon i \to j \text{ in } Q \} \\
& \qquad \cup \{ \ell_i \colon i \to i \mid i \text{ a vertex of } Q \}
\end{align*}
where the arrows $\alpha^{\op}$ are in degree $1$ and the loops $\ell_i$ are in degree $2$.

The differential is given as
\begin{align*}
 d \alpha^{\op} & = \partial_{\alpha} W \qquad \text{and} \\
 d \ell_i & = \sum_{\alpha \colon i \smash{\to} x} \alpha^{\op} \alpha - \sum_{\alpha \colon x \smash{\to} i} \alpha \alpha^{\op}.
\end{align*}
\end{definition}

Now we can state the main result of this section, which was originally proved by Keller and Yang.

\begin{theorem}
Let $(Q, W)$ be a quiver with potential. Then the derived endomorphism ring of the silting mutation of $(kQ^{\rm Gin}, d)$ in $i$ is the Ginzburg algebra of $\mu_i(Q, W)$.
\end{theorem}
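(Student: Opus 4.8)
The plan is to deduce this theorem as a direct application of Theorem~\ref{thm.main_result}: one starts from the dg quiver algebra $kQ^{\rm Gin}$, performs the combinatorial mutation of Section~\ref{sec.mutation} at the vertex $i$, and then identifies the resulting dg quiver $(kM, \partial)$, after removing a contractible piece, with the Ginzburg algebra of $\mu_i(Q,W)$. First I would record that $Q^{\rm Gin}$ has no loops of degree $0$ at $i$ (the only loops $\ell_i$ sit in degree $2$), so Theorem~\ref{thm.main_result} applies and the derived endomorphism ring of the left silting mutation is presented by $(kM,\partial)$. Thus the whole proof is a bookkeeping computation comparing two explicit dg quivers.

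The bulk of the argument is organizing the arrows of $M$ into the arrows of $(\mu_i(Q))^{\rm Gin}$. Here $A$ is the set of degree-$0$ arrows out of $i$, i.e.\ the arrows $\beta\colon i\to y$ of $Q$ itself (the arrows $\alpha^{\op}$ into $i$ have degree $1$, and those out of $i$ have been rotated to degree $1$ as well). I would go through Steps~1--4 and match: the rotated arrows $\beta$ out of $i$ become the opposite arrows $\beta^*$ of the mutated quiver (degree $0$); each arrow $\varphi\colon j\to i$ of $Q^{\rm Gin}$ of positive degree gets its degree lowered and, combined with Step~2, the composition arrows $\beta\varphi$ realize the new ``broken composition'' arrows $[\beta\alpha]$ of $\mu_i(Q)$ together with their degree-$2$ and degree-$1$ companions coming from $\varphi$ ranging over original arrows into $i$, opposite arrows $\alpha^{\op}$, and loops $\ell_i$; the anti-composition arrows of Step~3 (and Step~4 for the loop $\ell_i$, which is a cycle at $i$) produce exactly the arrows $\alpha^*$-style reversals of arrows out of $i$ together with the new opposite and loop generators at $i$. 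One then checks the degrees agree: arrows in degree $0$, opposite arrows in degree $1$, loops in degree $2$, matching $(\mu_i(Q))^{\rm Gin}$. After this one is left with some extra arrows (for instance pieces built from $\ell_i$ and from $d\ell_i$) that, as in Examples~\ref{example.APR_tilt} and the subsequent example, form a contractible dg-ideal and can be factored out without changing the quasi-isomorphism type.

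Finally I would verify that under this dictionary the differential $\partial$ of Definition~\ref{def.new_diff} matches the Ginzburg differential of $\mu_i(Q,W)$. The key points: the term $\sum_{\alpha\in A}\alpha^*\alpha\varphi$ in $\partial(\varphi)$ for $\ta{\varphi}=i$ is precisely what produces, in the surviving quotient, the contribution $\sum_{\beta\colon i\to y}[\beta\alpha]\,\alpha^*\beta^*$ to the mutated potential $\mu_i(W)$ via the relation $d\alpha^{\op}=\partial_\alpha W$; the formula $\partial(\varphi\alpha^{-1})=\partial(\varphi)\alpha^{-1}+(-1)^{\|\varphi\|}\varphi\alpha^*-(-1)^{\ci{\ta{\varphi}}}\dec(d\varphi/\alpha)$ reproduces the cyclic-derivative bookkeeping of $\partial_{\beta^*}\mu_i(W)$ and the ``$\beta\alpha$ vs $[\beta\alpha]$'' substitution; and the $\dec$ operation, inserting $\Delta=1-\sum_\alpha\alpha^{-1}\alpha$, is exactly the substitution of composed paths through $i$ by their bracketed versions that is built into the definition of $\mu_i(W)$ and into $d\ell_i$. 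I would then check the loop relation $d\ell_i=\sum_{\alpha\colon i\to x}\alpha^{\op}\alpha-\sum_{\alpha\colon x\to i}\alpha\alpha^{\op}$ comes out of $\partial$ applied to the arrow of $M$ playing the role of $\ell_i$ at the mutated vertex.

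The main obstacle I expect is precisely this last matching of differentials: keeping the signs, the degree shifts $\ci{\st\varphi}$ and $\ci{\ta\varphi}$, and the $\dec$/$\red$ operations aligned with the cyclic derivatives $\partial_\alpha$ and with the sign conventions in the Ginzburg differential, all while correctly identifying which generated dg-ideal is contractible. The cleanest route is probably to treat the loops $\ell_i$ at $i$ (which force Step~4) carefully first, since that is where the general formula $\partial(\alpha\varphi\beta^{-1})$ enters and where the $d\ell_i$ relation must be recovered, and only then handle arrows not incident to $i$, which pass through essentially unchanged. Since everything in this section is, by the author's own statement, a special case of the next section, I would keep the verification self-contained but brief, deferring the fully general sign analysis to Section~\ref{sec.higher_ginz}.
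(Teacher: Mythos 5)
Your overall strategy is the same as the paper's: check that $Q^{\rm Gin}$ has no degree-$0$ loops at $i$, apply Theorem~\ref{thm.main_result}, enumerate the arrows of $M$ produced by Steps 1--4, cancel a contractible piece, and match the remaining differential with the Ginzburg differential of $\mu_i(Q,W)$. The arrow bookkeeping you sketch is essentially right.

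The gap is in the cancellation and the differential check, which you defer as ``the main obstacle I expect'' rather than carry out. Concretely: the paper does not discard a vaguely specified contractible ideal built from $\ell_i$; it pairs specific arrows, namely $\ell_i\alpha^{-1}$ (degree $3$) against $\alpha^{\op}$ (degree $2$) using $\partial(\ell_i\alpha^{-1}) = \partial(\ell_i)\alpha^{-1} + \ell_i\alpha^* + (d\ell_i)/\alpha$ with $(d\ell_i)/\alpha = \alpha^{\op}$, and $\alpha\ell_i\beta^{-1}$ against $\alpha\beta^{\op}$. Identifying these pairs is what leaves exactly the quiver of $(\mu_i Q)^{\rm Gin}$, with $\alpha\ell_i$ surviving as $(\alpha^*)^{\op}$ and $\varphi^{\op}\alpha^{-1}$ as $(\alpha\varphi)^{\op}$. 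More importantly, the hardest verification is not the loop at the mutated vertex $i$ (the only loop you propose to check) but the loops $\ell_j$ at $j\neq i$: the generator $\alpha\alpha^{\op}$ appearing in $\dec(d\ell_j)$ has been eliminated, so one must substitute $-\alpha\alpha^{\op} = \sum_{\varphi}\alpha\varphi\,\varphi^{\op}\alpha^{-1} + \alpha\ell_i\alpha^*$ (read off from $\partial(\alpha\ell_i\alpha^{-1})$), and it is precisely this substitution that produces the terms of $d\ell_j$ coming from the new arrows $[\alpha\varphi]$ and $(\alpha^*)^{\op}$ of the mutated Ginzburg algebra. Without naming the cancelling pairs and performing this rewriting, the proof is an outline rather than a verification; as written it would also let you overlook the sign adjustments (replacing $\varphi^{\op}\colon i\to x$, $\varphi^{\op}\alpha^{-1}$ and $\ell_i$ by their negatives) needed to make the two differentials literally agree.
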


\begin{proof}
Clearly we are in the setup of Theorem~\ref{thm.main_result}, so we just apply it formally.

The mutated quiver $M$ as described in Section~\ref{sec.mutation} has arrows as follows:

\noindent
Step 1: Rotation of arrows
\begin{itemize}
 \item all arrows not involving $i$ just like $Q^{\rm Gin}$;
 \item for every arrow $\varphi \colon x \to i \in Q$ an arrow $\varphi \colon x \to i$ of degree $1$ and an arrow $\varphi^{\op} \colon i \to x$ of degree $0$;
 \item for every arrow $\alpha \colon i \to x \in Q$ an arrow $\alpha^* \colon x \to i$ of degree $0$ and an arrow $\alpha^{\op} \colon x \to i$ of degree $2$;
 \item the loop $\ell_i$ of degree $2$;
\end{itemize}
Step 2: Compositions
\begin{itemize}
\item for every composition $x \to^{\varphi} i \to^{\alpha} y$ in $Q$ an arrow $\alpha \varphi \colon x \to y$ of degree $0$;
 \item for every subquiver $x \arrow{<-}^{\beta} i \to^{\alpha} y$ in $Q$ (where $\alpha$ and $\beta$ might coincide), an arrow $\alpha \beta^{\op} \colon x \to y$ of degree $1$;
 \item for every arrow $\alpha \colon i \to x$ in $Q$ a composition arrow $\alpha \ell_i$;
\end{itemize}
Step 3: Anti-compositions
\begin{itemize}
 \item for every composition $x \to^{\varphi} i \to^{\alpha} y$ in $Q$ an arrow $\varphi^{\op} \alpha^{-1}$ of degree $1$;
 \item for every arrow $\alpha \colon i \to x$ in $Q$ an arrow $\ell_i \alpha^{-1} \colon x \to i$ of degree $3$;
\end{itemize}
Step 4
\begin{itemize}
 \item for every subquiver $x \arrow{<-}^{\beta} i \to^{\alpha} y$ in $Q$ (where $\alpha$ and $\beta$ might coincide), an arrow $\alpha \ell_i \beta^{-1} \colon x \to y$ of degree $2$.
\end{itemize}
Since these are a lot of arrows, and might seem confusing, let us depict the situation in case of the quiver
\[ Q = 1 \to^{\varphi} 2 \to^{\alpha} 3 \]
for $i = 2$. This quiver gives a good impression of the general situation. We have
\[ Q^{\rm Gin} =
\begin{tikzpicture}[baseline=-3pt,inner sep=1pt]
 \node (1) at (0,0) {$1$};
 \node (2) at (2,0) {$2$};
 \node (3) at (4,0) {$3$};
 \draw [->,bend left=20] (1) to node [above] {$\scriptstyle \varphi$} (2);
 \draw [->,bend left=20] (2) to node [pos=.1,sloped,rotate=90] {$-$} node [below] {$\scriptstyle \varphi^{\op}$} (1);
 \draw [->,bend left=20] (2) to node [above] {$\scriptstyle \alpha$} (3);
 \draw [->,bend left=20] (3) to node [pos=.1,sloped,rotate=90] {$-$} node [below] {$\scriptstyle \alpha^{\op}$} (2);
 \draw [->,out=60,in=120,looseness=12] (1) to node [pos=.1,sloped,rotate=90] {$=$} node [above] {$\scriptstyle \ell_1$} (1);
 \draw [->,out=60,in=120,looseness=12] (2) to node [pos=.1,sloped,rotate=90] {$=$} node [above] {$\scriptstyle \ell_2$} (2);
 \draw [->,out=60,in=120,looseness=12] (3) to node [pos=.1,sloped,rotate=90] {$=$} node [above] {$\scriptstyle \ell_3$} (3);
\end{tikzpicture} \]
where the tags denote the degree of the arrows. The mutated quiver is
\[ M =
\begin{tikzpicture}[baseline=-3pt,inner sep=1pt]
 \node (1) at (0,0) {$1$};
 \node (2) at (2,0) {$2$};
 \node (3) at (4,0) {$3$};
 \draw [->,bend left=20] (1) to node [pos=.1,sloped,rotate=90] {$-$} node [above] {$\scriptstyle \varphi$} (2);
 \draw [->,bend left=20] (2) to node [below] {$\scriptstyle \varphi^{\op}$} (1);
 \draw [->,bend right=20] (3) to node [above] {$\scriptstyle \alpha^*$} (2);
 \draw [->,bend left=20] (3) to node [pos=.1,sloped,rotate=90] {$=$} node [below] {$\scriptstyle \alpha^{\op}$} (2);
 \draw [->,out=60,in=120,looseness=12] (1) to node [pos=.1,sloped,rotate=90] {$=$} node [above] {$\scriptstyle \ell_1$} (1);
 \draw [->,out=60,in=120,looseness=12] (2) to node [pos=.1,sloped,rotate=90] {$=$} node [above] {$\scriptstyle \ell_2$} (2);
 \draw [->,out=60,in=120,looseness=12] (3) to node [pos=.1,sloped,rotate=90] {$=$} node [above] {$\scriptstyle \ell_3$} (3);
 \draw [->,bend left=40,looseness=1.5] (3) to node [pos=.1,sloped,rotate=90] {$\equiv$} node [below] {$\scriptstyle \ell_2 \alpha^{-1}$} (2);
 \draw [->,bend left=40,looseness=1.5] (2) to node [pos=.1,sloped,rotate=90] {$-$} node [above] {$\scriptstyle \alpha \ell_2$} (3);
 \draw [->,out=-20,in=20,looseness=50] (3) to node [pos=.1,sloped,rotate=90] {$-$} node [left] {$\scriptstyle \alpha \alpha^{\op}$} (3);
 \draw [->,out=-40,in=40,looseness=30] (3) to node [pos=.1,sloped,rotate=90] {$=$} node [right] {$\scriptstyle \alpha \ell_2 \alpha^{-1}$} (3);
 \draw [->,bend right=85] (1) to node [above] {$\scriptstyle \alpha \varphi$} (3);
 \draw [->,bend left=95,looseness=1.2] (3) to node [pos=.1,sloped,rotate=90] {$-$} node [below] {$\scriptstyle \varphi^{\op} \alpha^{-1}$} (1);
\end{tikzpicture} \]
The reader will note that this contains more arrows than the Ginzburg algebra of the mutated quiver, and in fact it even contains an arrow of degree $3$, which does not occur in Ginzburg algebras.

We observe
\[ \partial( \ell_i \alpha^{-1} ) = \partial(\ell_i) \alpha^{-1} + \ell_i \alpha^* + \underbrace{(d \ell_i) / \alpha}_{= \alpha^{\op}} . \]
Therefore, up to quasi-isomorphism, the arrows $\ell_i \alpha^{-1}$ and $\alpha^{\op}$ cancel.

Similarly the arrows $\alpha \ell_i \beta^{-1}$ and $\alpha \beta^{\op}$ cancel.

Thus we are left with
\begin{itemize}
 \item all arrows not involving $i$, plus $\ell_i$, exactly as in $Q^{\rm Gin}$;
 \item for an incoming arrow $\varphi \colon x \to i$ in $Q$: arrows $\varphi \colon x \to i$ and $\varphi^{\op} \colon i \to x$ of degrees $1$ and $0$, respectively;
 \item for an outgoing arrow $\alpha \colon i \to x$ in $Q$: arrows $\alpha^* \colon x \to i$ and $\alpha \ell_i \colon i \to x$ of degrees $0$ and $1$, respectively;
 \item for any sequence of arrows $x \to^{\varphi} i \to^{\alpha} y$ in $Q$: arrows $\alpha \varphi \colon x \to y$ and $\varphi^{\op} \alpha^{-1} \colon y \to x$ of degrees $0$ and $1$, respectively.
\end{itemize}
This is exactly the quiver of the Ginzburg algebra of the mutated quiver. It only remains to check that also the differential $\partial$ on this new quiver coincides with the differential of the Ginzburg algebra.

Clearly for the arrows $\varphi^{\op}$ not involved in the vertex $i$ the differential remains what it was, and for all arrows of degree $0$ the differential will be $0$. We calculate:

For an arrow $\varphi$ of $Q$ ending in $i$, we have
\[ \partial(\varphi) = \sum_{\mathclap{\alpha \colon i \smash{\to} x}} \alpha^* \alpha \varphi; \]
For an arrow $\alpha$ of $Q$ starting in $i$, we have 
\[ \partial(\alpha \ell_i) = - \sum_{\varphi \colon x \smash{\to} i} \alpha \varphi \varphi^{\op}. \]
For an arrow of the form $\varphi^{\op} \alpha^{-1}$ we obtain
\[ \partial( \varphi^{\op} \alpha^{-1}) = \varphi^{\op} \alpha^* - \dec ((d \varphi^{\op}) / \alpha) = \varphi^{\op} \alpha^* - \partial_{\varphi}W / \alpha. \]
Note that the decoration can be omitted here, since $d \varphi^{\op}$ is of degree $0$, and thus can never pass through vertex $i$ and leave via an arrow of positive degree.

For the loop at the mutation vertex we obtain
\[ \partial(\ell_i) = \sum_{\mathclap{\alpha \colon i \smash{\to} x}} \alpha^* \alpha \ell_i + \sum_{\mathclap{\varphi \colon x \smash{\to} i}} \varphi \varphi^{\op}. \]
Finally, for any loop $\ell_j$ at a vertex $j \neq i$, we have
\begin{align*}
\partial(\ell_j) & = \dec ( \sum_{\mathclap{\varphi \colon j \smash{\to} x}} \varphi^{\op} \varphi - \sum_{\mathclap{\varphi \colon x \smash{\to} j}} \varphi \varphi^{\op} ) \\
& = \; \sum_{\mathclap{\substack{\alpha \colon j \smash{\to} x \\ x \neq i}}} \varphi^{\op} \varphi - \sum_{\mathclap{\substack{\varphi \colon x \smash{\to} j \\ x \neq i}}} \varphi \varphi^{\op} + \sum_{\mathclap{\varphi \colon j \smash{\to} i}} \varphi^{\op} \Delta \varphi - \sum_{\mathclap{\alpha \colon i \smash{\to} j}} \alpha \alpha^{\op} ) \\
& = \; \sum_{\mathclap{\alpha \colon j \smash{\to} x}} \varphi^{\op} \varphi - \sum_{\mathclap{\substack{\varphi \colon j \smash{\to} x \\ \alpha \colon i \smash{\to} y}}} \varphi^{\op} \alpha^{-1} \alpha \varphi - \sum_{\mathclap{\substack{\alpha \colon x \smash{\to} j \\ x \neq i}}} \varphi \varphi^{\op} - \sum_{\mathclap{\alpha \colon i \smash{\to} j}} \alpha \alpha^{\op}
\end{align*}
We need to recall how we canceled $\alpha \alpha^{\op}$: In fact we canceled the differential of $\alpha \ell_i \alpha^{-1}$, which is
\[ \partial( \alpha \ell_i \alpha^{-1}) = \partial( \alpha \ell_i ) \alpha^{-1} - \alpha \ell_i \alpha^* - \alpha \alpha^{\op} \]
whence we get that
\[ - \alpha \alpha^{\op} = \sum_{\varphi \colon x \smash{\to} i} \alpha \varphi \varphi^{\op} \alpha^{-1} + \alpha \ell_i \alpha^*. \]
Inserting above we obtain
\[ \partial(\ell_j) = \; \sum_{\mathclap{\alpha \colon j \smash{\to} x}} \varphi^{\op} \varphi - \sum_{\mathclap{\substack{\varphi \colon j \smash{\to} x \\ \alpha \colon i \smash{\to} y}}} \varphi^{\op} \alpha^{-1} \alpha \varphi - \sum_{\mathclap{\substack{\alpha \colon x \smash{\to} j \\ x \neq i}}} \varphi \varphi^{\op} + \sum_{\mathclap{\substack{\alpha \colon i \smash{\to} j \\ \varphi \colon x \smash{\to} i}}} \alpha \varphi \varphi^{\op} \alpha^{-1} + \sum_{\mathclap{\alpha \colon i \smash{\to} j}} \alpha \ell_i \alpha^*. \]
Comparing, one sees that we do have the differential for the Ginzburg algebra of the mutated quiver. (Except for a few signs, which can be fixed by replacing all arrows $\varphi^{\op} \colon i \to x$, $\varphi^{\op} \alpha^{-1}$, and the loop $\ell_i$ by their negative.)
\end{proof}

\section{Higher Ginzburg algebras} \label{sec.higher_ginzburg} \label{sec.higher_ginz}

In this section we apply our mutation rule to higher Ginzburg algebras. In particular we will observe that the mutation of a higher Ginzburg algebra is again a higher Ginzburg algebra. Throughout, we fix a number $d \geq 2$, which will be the Calabi-Yau dimension of our Ginzburg algebra.

We start by recalling the relevant notation.

\begin{definition}[See \cite{van_den_bergh}] \label{def.higher_Ginzburg}
Let $Q$ be a graded quiver, with arrows in degrees $0$, \dots, $d-2$. For any arrow $\varphi$ of $Q$, there is an arrow (up to sign) $\varphi^{\op}$, in the opposite direction, and such that $| \varphi| + |\varphi^{\op}| = d - 2$. Here ``up to sign'' means that we allow $\varphi^{\op}$ to be either an arrow itself, or the negative of an arrow. Moreover we require that $\varphi^{\op \op} = - (-1)^{|\varphi| | \varphi^{op}|} \varphi$, that is up to signs the opposite of the opposite is the original arrow.

Assume $W$ is a \emph{potential} on $Q$, that is a linear combination of cycles up to (signed) cyclic permutation, which is homogeneous of degree $d-3$. Moreover the potential is required to satisfy the condition that the necklace bracket $\{ W, W \}$ vanishes. For us the meaning of this condition is that the differential on the Ginzburg algebra defined below squares to $0$, so we ignore the technicalities and assume this.

For an arrow $\varphi$ of $Q$, the \emph{cyclic derivative} of a cycle $c$ is given as $\partial_{\alpha} c = \sum_{c = p \alpha q} (-1)^{|p \alpha| |q |} qp$. Clearly this extends to linear combinations of cycles.

The \emph{$d$-dimensional Ginzburg algebra} associated to $(Q, W)$ is given by the quiver $\bar{Q}$ obtained from $Q$ by adding a loop $\ell_j$ of degree $d-1$ at every vertex $j$. The differential is given as follows:
\begin{align*}
d \varphi & = \partial_{\varphi^{\rm op}} W && \varphi \text{ an arrow in } Q, \\
d \ell_i & = \sum_{\varphi} \varphi \varphi^{\rm op} && i \text{ a vertex of } Q.
\end{align*}
where the sum runs over all arrows $\varphi$ of $Q$ ending in vertex $i$.
\end{definition}

\begin{remark} \label{rem.orig=opp}
Comparing to Van den Bergh's conventions in \cite[Section~10.3]{van_den_bergh}, we note the following
\begin{itemize}
\item Van den Bergh starts with a quiver, and then adds new opposite arrows for all the arrows of the original quiver. In our terminology here we start with a quiver which already contains all these arrows.
\item Van den Bergh ``derives on the left'', while we ``derive on the right'' -- therefore we obtain the sign $(-1)^{|p \alpha| |q |}$ instead of his  $(-1)^{|p| | \alpha q |}$.
\item In Van den Bergh's notation there are different signs in the definition of the differential of original and opposite arrows. Here we compensate for those by the sign that appears when taking the opposite twice. 
\end{itemize}
\end{remark}

\subsection*{Mutation}

We have the following notion of mutation of higher dimensional quivers with potential:

\begin{definition}
Let $(Q, W)$ be as in Definition~\ref{def.higher_Ginzburg} above. Let $i$ be a vertex of $Q$ without any loops of degree $0$ attached to it. Then the \emph{right mutation} $\mu^{\rm R}_i(Q, W)$ is defined by the following construction.

First we construct a new graded quiver $Q\m$:
\begin{enumerate}
\item increase by $1$ the degree of all arrows ending in vertex $i$, and decrease the degree of all arrows starting in $i$;
\item for any arrow $\alpha$ of degree $0$ starting in $i$ in the original quiver (i.e.\ any arrow that was assigned degree $-1$ in step 1): remove $\alpha$ and $\alpha^{\op}$, and introduce a new pair of arrows $\alpha^*$ and $(\alpha^*)^{\op\m}$ in the opposite direction;
\end{enumerate}
(We may note that we could have done these two things in one step, by just saying we calculate modulo $d-1$ in (1). However in the sequel it will be an advantage to not risk confusing the arrows $\alpha$ and $(\alpha^*)^{\op}$.)

It should also be noted that we might have destroyed our assumption $\varphi^{\op \op} = - (-1)^{| \varphi| | \varphi^{\op} |} \varphi$ when changing the degrees -- this forces us to change some signs in the ``$\op$", at least if $d$ is even. We choose to set $\varphi^{\op\m} = (-1)^{(d+1) \ci{ \rm{t}_{\varphi}}} \varphi^{\op}$, that is we change the sign of the opposite of all arrows ending in $i$, provided $d$ is even.
\begin{enumerate}
\setcounter{enumi}{2}
\item add new arrows for all ``broken compositions'': for any arrow $\varphi$ ending in $i$ of degree at most $d-3$, and any arrow $\alpha$ of degree $0$ leaving $i$, add a new arrow which is the formal composition $\alpha \varphi$, and it's opposite $(\alpha \varphi)^{\op\m} = - \varphi^{\op} \alpha^{-1}$; (As before we note that this notation should not create any ambiguity, since $\alpha$ and $\alpha^{-1}$ only appear in such composition arrows, and not as arrows of their own right.)
\item for any loop $\varphi$ at $i$ in $Q$ (i.e.\ the additional loop $\ell_i$ does not count), and any two arrows $\alpha$ and $\beta$ of degree $0$ starting in $i$, we introduce a new formal composite arrow $\alpha \varphi \beta^{-1}$ of degree identical to the degree of $\varphi$. (This arrow's opposite will be given as $(\alpha \varphi \beta^{-1})^{\op\m} = \beta \varphi^{\op} \alpha^{-1}$.
\end{enumerate}
The potential on $Q\m$ is given as
\[ W_{\rm M} = \dec_{\rm cyc} W + \sum_{\alpha, \varphi} \alpha \dec( \varphi \varphi^{\op\m} ) \alpha^*, \]
where the sum runs over all arrows $\varphi$ of $Q$ ending in $i$, and all arrows $\alpha$ of $Q$ of degree $0$ starting in $i$.

Here $\dec_{\rm cyc} W$ is to be understood cyclically, that is we also allow introduction of $\Delta$ ``between the ends of the cycle''. Explicitly, for a cycle $c$ this means
\[ \dec_{\rm cyc} c = \begin{cases} \dec c & \text{if $c$ does not start and end in $i$} \\ (-1)^d \dec c - \sum \alpha (\dec c) \alpha^{-1} & \text{if $c$ starts and ends in $i$,} \end{cases}  \]
where we assume that $c$ does not start with an arrow $\alpha \in A$ (this may always be achieved by cyclic permutation).
See Remark~\ref{rem.dec_cyclic} below for an explanation of the sign.
\end{definition}

\begin{remark} \label{rem.dec_cyclic}
To motivate the sign appearing in the definition of cyclic decoration, consider a cycle $c$ starting and ending in $i$, but passing through at least one different vertex. Then we have
\begin{align*}
c & = pq \qquad \qquad \qquad \text{such that } \ta{q} = \st{p} \neq i \\
& \sim (-1)^{|p| |q|} qp \qquad \qquad \text{(signed cyclic equivalence)} \\
\overset{\dec}{\smash{\mapsto}\vphantom{=}} & \phantom{=} (-1)^{|p| |q|} \dec(q) \Delta \dec(p) \\
& =  (-1)^{|p| |q|} \dec(q) \dec(p) \\ & - \sum_{\alpha} (-1)^{|p| |q|} \dec(q) \alpha^{-1} \cdot \alpha \dec(p) \\
& \sim  (-1)^{|p| |q|} (-1)^{\| \dec q \| \| \dec p \|} \dec(p) \dec(q) \\ & -  \sum_{\alpha} (-1)^{|p| |q|} (-1)^{\| \dec q \alpha^{-1} \| \| \alpha \dec p \|} \alpha \dec(p) \dec(q) \alpha^{-1} \\
& = (-1)^d \dec(c) - \sum_{\alpha} \alpha \dec(c) \alpha^{-1}
\end{align*}
where, for the last equality, we note that
\[ (-1)^{|p| |q|} (-1)^{\| \dec q \| \| \dec p \|} = (-1)^{|p| |q|} (-1)^{(|q|+1)(|p|-1)} = (-1)^{|p| + |q| + 1} = (-1)^d \]
if $c$ is of degree $d-3$, while $\| \dec(q) \alpha^{-1} \| = |q|$ and $\| \alpha \dec(p) \| = |p|$.
\end{remark}

In the following lemma, we calculate the cyclic derivatives of the mutated quiver with potential, in terms of the cyclic derivatives of the original.

\begin{lemma} \label{lem.der_of_mutated}
In the situation above, the cyclic derivatives of the mutated potential are
\begin{align*}
\partial_{\varphi^{\op\m}} W\m & = \dec \red \partial_{\varphi^{\op}} W \\ & \text{ for } \varphi \in Q \cap Q\m \text{ not ending in vertex $i$} \\
\partial_{\varphi^{\op\m}} W\m & = - \dec \partial_{\varphi^{\op}} W + \sum_{\alpha} \alpha^* (\alpha \varphi) \\ & \text{ for } \varphi \in Q \cap Q\m \text{ ending in vertex $i$} \\
\partial_{(\alpha^*)^{\op\m}} W\m & = 0 \\
\partial_{\alpha^*} W\m & = \sum_{\varphi} \alpha \dec( \varphi \varphi^{\op\m} ) \\
\partial_{(\alpha \varphi)^{\op\m}} W\m & = \alpha \dec \red (\partial_{\varphi^{\op}} W) \\
\partial_{(\varphi \alpha^{-1})^{\op\m}} W\m & = (\partial_{\varphi^{\op\m}} W\m ) \alpha^{-1} + (-1)^{\| \varphi \|} \varphi \alpha^* - (-1)^{\ci{\ta\varphi}} \dec (\partial_{\varphi^{\op}} W / \alpha) \\
\partial_{(\alpha \varphi \beta^{-1})^{\op\m}} W\m & = (\partial_{(\alpha \varphi)^{\op\m}} W\m ) \beta^{-1} + (-1)^{ \| \alpha \varphi \|} \alpha \varphi \beta^* - \alpha \dec (\partial_{\varphi^{\op}} W / \beta)
\end{align*}
\end{lemma}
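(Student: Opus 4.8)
The plan is to compute all seven cyclic derivatives directly from the defining formula $W\m = \dec_{\rm cyc} W + \sum_{\alpha, \varphi} \alpha \dec(\varphi \varphi^{\op\m}) \alpha^*$, using additivity of the cyclic derivative and treating the two summands separately. Two of the cases are immediate. The new reversed arrow $(\alpha^*)^{\op\m}$ occurs in neither summand — the decoration only ever produces the formal symbols $\alpha^{-1}$, never any $(\alpha^*)^{\op\m}$, and $W$ itself involves only arrows of $Q$ — so $\partial_{(\alpha^*)^{\op\m}} W\m = 0$. And $\alpha^*$ occurs only as the rightmost arrow of the monomials $\alpha \dec(\varphi \varphi^{\op\m}) \alpha^*$, where the cyclic-derivative sign is trivial; rotating then gives exactly $\sum_\varphi \alpha \dec(\varphi \varphi^{\op\m})$. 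The remaining five identities involve genuine arrows of $Q$ (or the formal composites built from them) and require real work.

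The organizing device I would isolate first is a ``chain rule'' comparing the cyclic derivative with the cyclic decoration: for an arrow $\xi$ of $Q$, the cyclic derivative of $\dec_{\rm cyc}W$ with respect to the image of $\xi$ equals, up to a sign governed by the degree shifts $\| \cdot \|$, the decoration of $\partial_\xi W$, possibly after a reduction. This should hold because the property ``a path passes through $i$, entering along an arrow of $Q_1$ and leaving along an arrow of $Q_1 \setminus A$'' is cyclically stable, so rotating a monomial does not move the inserted $\Delta$'s — the sole exception being the seam where one cuts the cycle open, which is precisely what the boundary term $-\sum_\alpha \alpha (\dec c)\alpha^{-1}$ in the definition of $\dec_{\rm cyc}$ on cycles through $i$ is built to absorb (the computation in Remark~\ref{rem.dec_cyclic} is exactly this bookkeeping). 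The same mechanism accounts for $\red(\cdot)$ and the operator $(\cdot)/\alpha$ of Construction~\ref{const.red_dec} on the right-hand sides: a term of $\partial_{\varphi^{\op}} W$ whose leading arrow lies in $A$ is not dropped but \emph{relocated} — either into a composition arrow $\alpha\varphi$ or into the boundary term above — and so must be subtracted from the ``naive'' contribution.

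With this in hand I would go through the arrow types one at a time. For $\varphi \in Q \cap Q\m$ not ending in $i$, one checks that $\varphi^{\op\m}$ never occurs as a standalone arrow in the correction summand (it can appear there only absorbed into a composite), so $\partial_{\varphi^{\op\m}}W\m = \partial_{\varphi^{\op\m}}(\dec_{\rm cyc}W) = \dec\red\partial_{\varphi^{\op}}W$ by the chain rule, the reduction being invisible unless $\st{\varphi} = i$, in which case it performs the relocation just described. For $\varphi$ ending in $i$, $\varphi^{\op\m}$ starts in $i$ and occurs literally (unabsorbed) as the middle factor of the monomial $(\alpha\varphi)\,\varphi^{\op\m}\,\alpha^*$ of the correction summand; isolating it gives $\sum_\alpha \alpha^*(\alpha\varphi)$, while $\dec_{\rm cyc}W$ contributes $-\dec\partial_{\varphi^{\op}}W$ (the cut here falls at a vertex-$i$ seam, which is why the sign flips and no reduction survives). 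For the three composite arrows $(\alpha\varphi)^{\op\m}$, $(\varphi\alpha^{-1})^{\op\m}$ and $(\alpha\varphi\beta^{-1})^{\op\m}$, each occurs as a single interior factor of one family of decorated monomials of $W\m$, so the cyclic derivative is a local Leibniz computation reproducing the three-term expressions displayed in the statement. As a running sanity check, every formula obtained must coincide term-for-term with the corresponding line of Definition~\ref{def.new_diff} under the substitution $d\varphi \leftrightarrow \partial_{\varphi^{\op}}W$ — this is exactly how $W\m$ was designed, and this comparison is what lets one conclude that the mutated higher Ginzburg algebra is again a higher Ginzburg algebra.

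I expect the main obstacle to be sign bookkeeping rather than combinatorics. One must track simultaneously the cyclic-derivative sign $(-1)^{|p\alpha||q|}$, the sign $(-1)^{(d+1)\ci{\ta{\varphi}}}$ hidden in the passage from $\varphi^{\op}$ to $\varphi^{\op\m}$ (nontrivial only for $d$ even), the $(-1)^d$ attached to $\dec_{\rm cyc}$ on cycles through $i$, the signs built into $(\alpha\varphi)^{\op\m} = -\varphi^{\op}\alpha^{-1}$ and $(\alpha\varphi\beta^{-1})^{\op\m} = \beta\varphi^{\op}\alpha^{-1}$, and the degree shifts $\|\varphi\| = |\varphi| - \ci{\st{\varphi}} + \ci{\ta{\varphi}}$. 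I would quarantine all of this into a single sign lemma, or run the computation for $d$ even and $d$ odd in parallel, so that the uniform combinatorial shape stays visible. Two further delicate points I would flag explicitly: the edge case $|\varphi| = d-2$, where $\varphi^{\op}$ itself lies in $A$ and the symbol ``$\varphi^{\op\m}$'' must be reinterpreted through the starred arrow; and loops at $i$, where a single cycle of $W$ may pass through $i$ more than once — there I would reduce to the cases already settled by peeling off the final loop and applying the Leibniz rule, mirroring the one-passage computation of Remark~\ref{rem.dec_cyclic}.
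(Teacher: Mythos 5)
Your proposal follows essentially the same route as the paper: compute each cyclic derivative directly from the two summands of $W\m$, observe that $(\alpha^*)^{\op\m}$ occurs nowhere and $\alpha^*$ only in the correction term, explain the $\red$ by absorption of $\alpha\varphi^{\op}$ into composite arrows, get the sign flip for arrows ending in $i$ from $(-1)^{d+1}\cdot(-1)^d$ via the cyclic-decoration seam of Remark~\ref{rem.dec_cyclic}, and reduce the last two formulas to a local sign computation. The paper carries out the sign bookkeeping inline rather than isolating a separate sign lemma, but the decomposition and the key observations are identical.
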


\begin{proof}
For the first statement we note that $\varphi^{\op\m}$ only appear in $W\m$ in places where it already appeared in $W$. (Also note that since $\varphi$ does not end in $i$ we have $\varphi^{\op\m} = \varphi^{\op}$.) Finally note that when $\varphi^{\op\m}$ is directly followed by an arrow $\alpha \in A$, then the combination $(\alpha \varphi^{\op})$ will be considered a single arrow in the new quiver, and thus not contribute to $\partial_{\varphi^{\op\m}} W_M$ -- whence the ``$\red$" in the formula. Finally we need to decorate, because $W\m$ is obtained from $W$ by decorating.

For the second statement, since $\varphi$ now ends in $i$ we have $\varphi^{\op\m} = (-1)^{d+1} \varphi^{\op}$. Moreover, as demonstrated in Remark~\ref{rem.dec_cyclic}, an additional sign $(-1)^d$ appears when cyclically decorating a cycle starting in vertex $i$. Thus
\[ \partial_{\varphi^{\op\m}} \dec_{\rm cyc} W = - \dec \partial_{\varphi^{\op}} W. \]
To complete the proof of the second equation, note that
\[ \partial_{\varphi^{\op\m}} \sum_{\alpha, \psi} \alpha \dec( \psi \psi^{\op\m} ) \alpha^* = \sum_{\alpha} \alpha^* (\alpha \varphi). \]

The third statement is clear, since $(\alpha^*)^{\op\m}$ does not appear in $W\m$.

For the fourth statement, we first note that $\alpha^*$ does not appear in $\dec W$. That gives us the statement immediately.

When considering the fifth statement, recall first that we defined $(\alpha \varphi)^{\op\m} = - \varphi^{\op} \alpha^{-1}$. We note that the arrow $\varphi^{\op} \alpha^{-1}$ does not appear in the second summand of $W\m$, so
\begin{align*}
\partial_{(\alpha \varphi)^{\op\m}} W_M & = - \partial_{\varphi^{\op} \alpha^{-1}} \dec_{\rm cyc} W \\
& = \alpha \dec \red (\partial_{\varphi^{\op}} W).
\end{align*}

The last two statements are (maybe not surprisingly, considering the formulas) most involved to check. First observe that since $(\alpha \varphi^{\op})^{\op\m} = - \varphi^{\op \op} \alpha^{-1} = (-1)^{| \varphi | | \varphi ^{\op} | } \varphi \alpha^{-1}$ we have
\begin{align*}
(\varphi \alpha^{-1})^{\op\m} & = - (-1)^{\| \varphi \alpha^{-1} \| \| \alpha \varphi^{\op} \|} (-1)^{| \varphi | | \varphi ^{\op} | } \alpha \varphi^{\op} \\
& = - (-1)^{(| \varphi | + \ci{\ta{\varphi}}) ( | \varphi^{\op} | + \ci{\st{\varphi^{\op}}})} (-1)^{| \varphi | | \varphi ^{\op} | } \alpha \varphi^{\op} \\
& = \begin{cases} -1 & \text{if } \ta\varphi \neq i \\ (-1)^{| \varphi | + | \varphi^{\op} |} & \text{if } \ta\varphi = i \end{cases} \quad \cdot \alpha \varphi^{\op} \\
& = - (-1)^{\ci{\ta\varphi} (d+1)} \alpha \varphi^{\op}
\end{align*}
Thus we obtain
\begin{align*}
\partial_{(\varphi \alpha^{-1})^{\op\m}} \dec_{\rm cyc} W & = - (-1)^{\ci{\ta\varphi} (d+1)} \partial_{\alpha \varphi^{\op}} \dec_{\rm cyc} W \\
& = - \underbrace{(-1)^{\ci{\ta\varphi} (d+1)} (-1)^{\ci{\ta\varphi} d}}_{= (-1)^{\ci{\ta\varphi}}} \partial_{\alpha \varphi^{\op}} \dec W \\
& = - (-1)^{\ci{\ta\varphi}} \dec (\partial_{\varphi^{\op}} W / \alpha) \\ & \qquad + (-1)^{\ci{\ta\varphi}} (\dec \red \partial_{\varphi^{\op}} W) \alpha^{-1}
\end{align*}
and
\begin{align*}
& \partial_{(\varphi \alpha^{-1})^{\op\m}} \sum_{\beta, \psi} \beta \dec(\psi \psi^{\op}) \beta^* = - (-1)^{\ci{\ta\varphi} (d+1)} \partial_{\alpha \varphi^{\op}} ( \sum_{\beta, \psi} \beta \dec(\psi \psi^{\op\m}) \beta^*)  \\
& \qquad = - (-1)^{\ci{\ta\varphi} (d+1)} \partial_{\alpha \varphi^{\op}} ( \alpha \varphi^{\op} \varphi^{\op \op\m} \alpha^* ) \\
& \qquad \qquad + \left[ (-1)^{\ci{\ta\varphi} (d+1)} \partial_{\alpha \varphi^{\op}} (\sum_{\beta} \beta \varphi \alpha^{-1} \alpha \varphi^{\op\m} \beta^*) \right]
\end{align*}
where the second summand only appears if $\varphi$ is a loop.

Now, since $\ta{\varphi^{\op}} = i$, we have
\[ \varphi^{\op\op\m} = (-1)^{d+1} \varphi^{\op\op} = (-1)^{d + |\varphi| |\varphi^{\op}|} \varphi, \]
and if $\varphi$ is a loop then
\[ \varphi^{\op\m} = (-1)^{d+1} \varphi^{\op}. \]
Inserting above we obtain
\begin{align*}
& = - (-1)^{\ci{\ta\varphi} (d+1)} (-1)^{d + |\varphi| |\varphi^{\op}|} \partial_{\alpha \varphi^{\op}} ( \alpha \varphi^{\op} \varphi \alpha^* ) \\
& \qquad + \left[ (-1)^{\ci{\ta\varphi} (d+1)} (-1)^{d+1} \partial_{\alpha \varphi^{\op}} (\sum_{\beta} \beta \varphi \alpha^{-1} \alpha \varphi^{\op} \beta^*) \right] \\
& = - (-1)^{\ci{\ta\varphi} (d+1)} (-1)^{d + |\varphi| |\varphi^{\op}|} (-1)^{\| \alpha \varphi^{\op} \| \| \varphi \|} \varphi \alpha^* \\
& \qquad + \left[ \sum_{\beta} \beta^* \beta \varphi \alpha^{-1}  \right] \\
& = (-1)^{ \| \varphi \|} \varphi \alpha^* + \left[ \sum_{\beta} \beta^* \beta \varphi \alpha^{-1}  \right] \\
\end{align*}
Summing up the two halves, and comparing to the first two formulas (depending on whether $\varphi$ ends in $i$ or not), we obtain the second last formula of the lemma.

The calculations used to verify the last equation are similar (and even slightly simpler) to the ones for the second last equation.
\end{proof}

Let $i$ be a vertex of the algebra.

\begin{theorem}
Let $(Q, W)$ be a quiver with potential. Then the derived endomorphism ring of the silting mutation of $(k\bar{Q}, d)$ in $i$ is the Ginzburg algebra of $\mu_i(Q, W)$.
\end{theorem}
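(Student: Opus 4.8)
The proof is the $d$-dimensional analogue of the argument in Section~\ref{sec.ginzburg}, with Lemma~\ref{lem.der_of_mutated} replacing the explicit cyclic-derivative computations made there. The $d$-dimensional Ginzburg algebra $(k\bar Q, d)$ is a dg quiver algebra concentrated in non-negative degrees (arrows in degrees $0,\dots,d-2$, loops $\ell_j$ in degree $d-1$), and $i$ carries no loop of degree $0$, so Theorem~\ref{thm.main_result} applies: the derived endomorphism ring of the silting mutation of $(k\bar Q, d)$ at $i$ is the mutated quiver $(kM,\partial)$ of Section~\ref{sec.mutation}. I would begin by listing the arrows of $M$, exactly as in the $d=3$ case but with the degree shifts of the general case: writing $A$ for the degree-$0$ arrows out of $i$ (equivalently the opposites of the degree-$(d-2)$ arrows into $i$), $M$ has the arrows and loops not meeting $i$ unchanged, the loop $\ell_i$ of degree $d-1$, the rotated arrows $\varphi$ (degree $|\varphi|+1$ for $\varphi$ into $i$, degree $|\varphi|-1$ for $\varphi$ out of $i$ of positive degree) together with $\alpha^*\colon j\to i$ of degree $0$ for $\alpha\in A$, and the composition, anti-composition and cycle arrows $\alpha\varphi$, $\varphi\alpha^{-1}$, $\ell_i\alpha^{-1}$, $\alpha\varphi\beta^{-1}$, $\alpha\ell_i\beta^{-1}$.

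Next I would record the two cancellations used for $d=3$. From Definition~\ref{def.new_diff},
\[ \partial(\ell_i\alpha^{-1}) = \partial(\ell_i)\alpha^{-1} + (-1)^{d-1}\ell_i\alpha^* + \dec\bigl((d\ell_i)/\alpha\bigr), \]
and $(d\ell_i)/\alpha$ is, up to sign, the rotated opposite arrow $\alpha^{\op}\colon j\to i$ of degree $d-1$; factoring out the dg-ideals generated by the $\ell_i\alpha^{-1}$ and their differentials therefore cancels the pairs $(\ell_i\alpha^{-1},\alpha^{\op})$ up to quasi-isomorphism. Likewise $\partial(\alpha\ell_i\beta^{-1})$ contains $\pm\,\alpha\beta^{\op}$, the Step~2 arrow built from the rotated arrow $\beta^{\op}\colon x\to i$ and $\alpha\colon i\to y$, so the pairs $(\alpha\ell_i\beta^{-1},\alpha\beta^{\op})$ cancel as well. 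One then checks that what survives is precisely the quiver $\overline{Q\m}$ of the $d$-dimensional Ginzburg algebra of $\mu_i(Q,W)$: the arrows not meeting $i$ and the loops $\ell_j$ are unchanged; the surviving rotated arrows incident to $i$ coming from $Q$ (those into $i$ of original degree $\leq d-3$, those out of $i$, and the rotated loops at $i$) are the corresponding arrows of $Q\m$; $\alpha^*$ and $\alpha\ell_i$ realise the pair $\alpha^*,(\alpha^*)^{\op\m}$; the surviving $\alpha\varphi$ and $\varphi^{\op}\alpha^{-1}=-(\alpha\varphi)^{\op\m}$ are the broken compositions of $Q\m$ and their opposites; and the surviving $\alpha\varphi\beta^{-1}$ (now with $\varphi$ a genuine loop at $i$ in $Q$) and their opposites are the remaining Step~4 arrows of $Q\m$. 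Hence $M\cong\overline{Q\m}$ as graded quivers.

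It remains to match the differentials. On $\overline{Q\m}$ the $d$-dimensional Ginzburg differential is $d\psi=\partial_{\psi^{\op\m}}W\m$ on arrows of $Q\m$ and $d\ell_j=\sum_\psi\psi\psi^{\op\m}$ over arrows $\psi$ ending in $j$. For the arrows of $Q\m$ the agreement with $\partial$ is immediate from Lemma~\ref{lem.der_of_mutated}, every clause of which reproduces the matching clause of Definition~\ref{def.new_diff} once one inserts the identity $d\varphi=\partial_{\varphi^{\op}}W$ valid in $\bar Q$ — for instance $\partial_{\varphi^{\op\m}}W\m=-\dec\partial_{\varphi^{\op}}W+\sum_\alpha\alpha^*(\alpha\varphi)$ for $\varphi$ ending in $i$ matches $\partial(\varphi)=\sum_\alpha\alpha^*\alpha\varphi-\dec\red(d\varphi)$, the reduction being automatic as $\varphi^{\op}$ has positive degree. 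The one genuinely new computation is for the loops: for $j\neq i$ one decorates and reduces $\sum_\varphi\varphi\varphi^{\op}$ over arrows $\varphi$ of $Q$ ending in $j$, and in the summands passing through $i$ rewrites the arrows $\alpha\alpha^{\op}$ (and $\varphi^{\op}\alpha^{-1}\alpha\varphi^{\op}$ when $\varphi$ is a loop) using the relation $-\alpha\alpha^{\op}=\sum_\psi\alpha\psi\psi^{\op}\alpha^{-1}+\alpha\ell_i\alpha^*$ obtained from $\partial(\alpha\ell_i\alpha^{-1})=0$ in the quotient; for $\ell_i$ itself one substitutes similarly from $\partial(\ell_i\alpha^{-1})=0$. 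This is exactly the end of the $d=3$ proof and goes through verbatim after reading off the degree shifts.

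The main obstacle, as for $d=3$, is signs: the identification becomes an equality of dg-algebras only after a uniform normalisation — replacing some opposite arrows $\varphi^{\op\m}$, the anti-compositions $\varphi^{\op}\alpha^{-1}$, and the loop $\ell_i$ by their negatives. In higher dimension this interacts with the $\op$-conventions of Remark~\ref{rem.orig=opp} and with the sign corrections $\varphi^{\op\m}=(-1)^{(d+1)\ci{\ta\varphi}}\varphi^{\op}$ and $(\alpha\varphi)^{\op\m}=-\varphi^{\op}\alpha^{-1}$ built into $\mu_i$, so the bulk of the remaining work is the bookkeeping of exhibiting one consistent sign choice under which Definition~\ref{def.new_diff}, Lemma~\ref{lem.der_of_mutated}, and the Ginzburg differential of $\overline{Q\m}$ all agree. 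Granting Lemma~\ref{lem.der_of_mutated}, no further structural input is needed: the theorem follows by applying Theorem~\ref{thm.main_result} formally and performing the two cancellations above.
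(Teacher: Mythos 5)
Your proposal follows the paper's own proof essentially step for step: apply Theorem~\ref{thm.main_result} formally, enumerate the arrows of $M$, cancel the pairs $(\ell_i\alpha^{-1},\alpha^{\op})$ and $(\alpha\ell_i\beta^{-1},\alpha\beta^{\op})$ up to quasi-isomorphism, match the differential on the ordinary arrows of $Q\m$ via Lemma~\ref{lem.der_of_mutated}, and handle the loops $\ell_j$ and $\ell_i$ by substituting back the cancellation relations. The only difference is one of completeness rather than method: the paper pins down the explicit sign identifications ($\alpha\ell_i\leftrightarrow(-1)^d(\alpha^*)^{\op\m}$ and $\ell_i\leftrightarrow(-1)^d\ell_i$) that you correctly flag as the remaining bookkeeping.
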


\begin{proof}
Again we are in the setup of Theorem~\ref{thm.main_result}.

The mutated quiver $M$ as described in Section~\ref{sec.mutation} has arrows as follows:

\noindent
Step 1: Rotation of arrows
\begin{itemize}
 \item all arrows not involving $i$ just like $\bar{Q}$;
 \item for every arrow $\alpha \colon i \to x \in Q$ of degree $0$ an arrow $\alpha^* \colon x \to i$ of degree $0$;
 \item for every arrow $\varphi \colon i \to x \in Q$ of positive degree the degree of $\varphi$ gets decreased by $1$;
 \item for every arrow $\varphi \colon x \to i \in Q$ the degree of $\varphi$ gets increased by $1$;
 \item all loops in $i$ (including $\ell_i$) keep their degree;
\end{itemize}
Step 2: Compositions
\begin{itemize}
\item for any two arrows $\alpha \colon i \to x \in Q$ and $\varphi \colon y \to i \in Q$, with $| \alpha | = 0$ the composition $\alpha \varphi$ of degree $| \varphi | - \ci{y} $;
\item for any two arrows $\alpha \colon i \to x \in Q$ the composition $\alpha \ell_i \colon i \to x$ of degree $d - 2$;
\end{itemize}
Step 3: Anti-compositions
\begin{itemize}
\item for any two arrows $\alpha \colon i \to x \in Q$ and $\varphi \colon i \to y \in Q$, with $| \alpha | = 0$ and $| \varphi | > 0$ the anti-composition $\varphi \alpha^{-1}$ of degree $\| \varphi \| + 1$;
\item for every arrow $\alpha \colon i \to x$ in $Q$ with $|\alpha| = 0$ an anti-composition arrow $\ell_i \alpha^{-1} \colon x \to i$ of degree $d$;
\end{itemize}
Step 4
\begin{itemize}
 \item for every loop $\varphi$ at $i$ in $Q$, and any two arrows $\alpha, \beta$ starting in $i$ with $| \alpha | = | \beta | = 0$, a new arrow $\alpha \varphi \beta^{-1}$ of degree $| \varphi |$; (note that $\alpha = \beta$ is allowed here.)
\item for any two arrows $\alpha, \beta$ starting in $i$ with $| \alpha | = | \beta | = 0$, a new arrow $\alpha \ell_i \beta^{-1}$ of degree $d-1$.
\end{itemize}
In steps 2 to 4 above it might seem artificial to distinguish the loops $\ell_i$ from arrows in $Q$. However, in the further calculation these two cases have to be treated separately.
(The author found it helpful to look at the quivers in the previous section (where $d = 3$), since they illustrate reasonably well what happens in general.)

Most of the arrows -- more precisely arrows from the original quiver, formal compositions $\alpha \varphi$ as well as anti-compositions $\varphi \alpha^{-1}$ and the combinations $\alpha \varphi \beta^{-1}$ -- appear in $M$ just like in $\mu_i(Q)$. Moreover, comparing Definition~\ref{def.new_diff} to Lemma~\ref{lem.der_of_mutated} we see that also the differential already matches exactly.

\medskip
It remains to deal with arrows involving the special loops $\ell_j$.

We observe
\[ \partial( \ell_i \alpha^{-1} ) = \partial(\ell_i) \alpha^{-1} - (-1)^d \ell_i \alpha^* + \underbrace{(d \ell_i) / \alpha}_{= \alpha^{\op}} . \]
Therefore, up to quasi-isomorphism, the arrows $\ell_i \alpha^{-1}$ and $\alpha^{\op}$ cancel.

Similarly the arrows $\alpha \ell_i \beta^{-1}$ and $\alpha \beta^{\op}$ cancel.

We identify the arrows $\alpha \ell_i$ in $M$ with $(-1)^d (\alpha^*)^{\op\m}$ in $\mu_i(Q)$. The reason for this choice of sign is that
\begin{align*}
\partial(\alpha \ell_i) & = \alpha \dec \red d \ell_i = \alpha \sum_{\varphi} \dec \varphi \varphi^{\op} = (-1)^{d+1} \alpha \sum_{\varphi} \dec \varphi \varphi^{\op\m} \\
& = - (-1)^d \partial_{\alpha^*} W\m = (-1)^d \partial_{(\alpha^*)^{\op\m \op\m}} W\m.
\end{align*}

It now only remains to check that the loops $\ell_j$ have the ``correct" differential. We start by considering the case $j \neq i$. The calculation is reasonably straight-forward, but lengthy, the reason being that all the different types of composition or anti-composition arrows will appear in the differential of $\ell_j$. We start by splitting it up into several parts:
\begin{align*}
\partial( \ell_j ) & = \dec d \ell_j = \dec \sum_{\varphi \colon ? \smash{\to} j} \varphi \varphi^{\op} \\
& = \sum_{\substack{ \varphi \colon ? \smash{\to} j \\ ? \neq i }} \varphi \varphi^{\op} + \sum_{\substack{\varphi \colon i \smash{\to} j \\ | \varphi | > 0 }} \varphi \varphi^{\op} - \sum_{\substack{\varphi \colon i \smash{\to} j \\ | \varphi | > 0 }} \sum_{\alpha} (\varphi \alpha^{-1}) (\alpha \varphi^{\op}) \\ & \qquad + \sum_{\substack{\alpha \colon i \smash{\to} j \\ | \alpha | = 0 }} \alpha \alpha^{\op}
\end{align*}
Noting that $(\varphi \alpha^{-1})^{\op\m} = - \alpha \varphi^{\op}$ we see that the first three sums are already as desired, and we are left with the final one. To analyse that one recall that we cancelled the arrows $\alpha \alpha^{\op}$ for $\alpha \ell_i \alpha^{-1}$ since
\[ \partial( \alpha \ell_i \alpha^{-1}) = \partial(\alpha \ell_i) \alpha^{-1} + (-1)^{\| \alpha \ell_i \|} \alpha \ell_i \alpha^* + \alpha \alpha^{\op}. \]
This means that we in fact identify
\[ \alpha \alpha^{\op} = - \partial(\alpha \ell_i) \alpha^{-1} - (-1)^{\| \alpha \ell_i \|} \alpha \ell_i \alpha^*. \]
Developing the right side we obtain
\[ \alpha \alpha^{op} = - \sum_{\varphi \colon ? \smash{\to} i} \alpha \varphi \underbrace{\varphi^{\op} \alpha^{-1}}_{\mathclap{= - (\alpha \varphi)^{\op\m}}} + \sum_{\varphi \colon i \smash{\to} i} \sum_{\beta} (\alpha \varphi \beta^{-1}) (\beta \varphi^{\op} \alpha^{-1}) - \underbrace{(-1)^d \alpha \ell_i}_{\widehat{=} (\alpha^*)^{\op\m}} \alpha^*. \]
So the differential of $\ell_j$ is exactly as it should be for a Ginzburg algebra.

\medskip
Finally we calculate
\begin{align*}
\partial(\ell_i) & = \sum_{\alpha} \alpha^* \alpha \ell_i - \dec \red d \ell_i \\
& = (-1)^d \sum_{\alpha} \alpha^* (\alpha^*)^{\op\m} - \sum_{\substack{ \varphi \colon ? \smash{\to} i \\ | \varphi^{\op} | > 0}} \varphi \varphi^{\op} + \sum_{\varphi \colon i \smash{\to} i} \sum_{\alpha} \varphi \alpha^{-1} \alpha \varphi^{\op}.
\end{align*}
In the second sum $\varphi$ ends in $i$, so $\varphi^{\op\m} = - (-1)^d \varphi^{\op}$. In the final sum we note that $(\varphi \alpha^{-1})^{\op\m} = (-1)^d \alpha \varphi^{\op}$.

So, if we identify $\ell_i$ in $M$ with $(-1)^d \ell_i$ in the mutated Ginzburg algebra, the differential matches.
\end{proof}

\section{Proof of the main result} \label{sec.proof}

Now we give a proof of Theorem~\ref{thm.main_result}. That is, we calculate the endomorphism ring of the silting object obtained by silting mutating in vertex $i$, and show that it is quasi-isomorphic to the dg quiver $(M, \partial)$ described in Section~\ref{sec.mutation}. Here we will use the fact that $(M, \partial)$ is in fact a dg quiver, which will be proven in Section~\ref{sec.is_diff}.

\medskip
When silting mutating in $i$, we replace $P_i$ by $P_i^*$, the cone of it's approximation by the other $P_j$. Explicitly that is
\[ P_i^* = \Cone(P_i \to \bigoplus_{\alpha \in A} P_{\tail(\alpha)}). \]
Clearly morphisms between the unmutated summands don't change (and neither does their differential).

We observe that, disregarding the differential, $P_i^* = P_i[1] \oplus \bigoplus_{\alpha \in A} P_{\tail(\alpha)}$. We denote by $\alpha^* \colon P_{\tail(\alpha)} \smash{\to} P_i^*$ and $\alpha_* \colon P_i^* \smash{\to} P_{\tail(\alpha)}$ the split mono- and epimorphisms corresponding to the summands $P_{\tail(\alpha)}$, and similarly $M \colon P_i \smash{\to} P_i^*$ and $E \colon P_i^* \smash{\to} P_i$ the split mono- and epimorphism corresponding to the summand $P_i$. Note that the $\alpha^*$ and $\alpha_*$ are of degree $0$, while $M$ is of degree $1$ and $E$ is of degree $-1$.

The endomorphism ring $\Gamma$ of $P_i^* \oplus \bigoplus_{j \neq i} P_j$ is generated by
\begin{itemize}
\item the arrows $\overline{\varphi}$, where $\varphi$ is an arrow of $Q$, and
\[ \overline{\varphi} = \begin{cases} \varphi & \text{if } \start(\varphi) \neq i \neq \tail(\varphi) \\ M \varphi & \text{if } \start(\varphi) \neq i = \tail(\varphi) \\ \varphi E & \text{if } \start(\varphi) = i \neq \tail(\varphi) \\ M \varphi E & \text{if } \start(\varphi) = i = \tail(\varphi) \end{cases} \]
\item the arrows $\alpha^*$ and $\alpha_*$.
\end{itemize}
subject to the relations
\begin{itemize}
 \item $\alpha_* \alpha^* = \id_{P_{\tail(\alpha)}}$;
 \item $\alpha_* \varphi = 0$ for any arrow $\varphi \neq \alpha^*$;
 \item $\varphi \alpha^* = 0$ for any arrow $\varphi \neq \alpha_*$.
\end{itemize}
We denote the quiver given by the arrows $\overline{\varphi}, \alpha_*,$ and $\alpha^*$ by $\overline{Q}$.

We now calculate the differential of these morphisms. The first thing to note is that the differential in $P_i^*$ is just given by the matrix
\[ \begin{pmatrix} -d_{P_i}[1] & 0 & \cdots & 0 \\ \alpha_1 & d_{P_{\tail(\alpha_1)}} & \ddots & \vdots \\ \vdots && \ddots & 0 \\ \alpha_s & 0 & \cdots & d_{P_{\tail(\alpha_s)}} \end{pmatrix} \]
We note that the $\alpha^*$, as well as $E$, commute with differentials, that is that $d \alpha^* = 0$ and $dE = 0$. Moreover
\begin{align*}
& d \alpha_* = d_{P_{\tail(\alpha)}} \alpha_* - \alpha_* d_{P_i^*} = - \overline{\alpha}, && \text{and} \\
& dM = d_{P_i^*}[-1] M + M d_{P_i} = \sum_{\alpha \in A} \alpha^* \alpha.
\end{align*}
Now, with the graded Leibniz rule, we obtain
\[ d \overline{\varphi} = \begin{cases} \overline{d \varphi} & \text{if } \tail(\overline{\varphi}) \neq i^* \\ - \overline{d\varphi} + \sum_{\alpha \in A} \alpha^* \overline{\alpha \varphi} & \text{if } \tail(\overline{\varphi}) = i^* \end{cases}. \]
(Note that the notion $\overline{\cdot}$ naturally extends to linear combinations of paths in $Q$, so writing the above makes sense.)

\subsection{Removing degree $-1$}

In the above construction we may note that among the ``new arrows'' $\overline{\varphi}$, some, more precisely those of the form $\overline{\alpha} = \alpha E$, have degree $-1$, while our endomorphism ring should be concentrated in non-negative degrees. The reason for this is simple: these arrows of degree $-1$ ``cancel'' against the parallel arrows $\alpha_*$ of degree $0$. The aim of this subsection is to make this statement precise.

\begin{remark}
A first na{\"i}ve idea might be (read: the author's first idea was) to factor out the ideal generated by $\alpha^*$ and $\overline{\alpha}$. However, given the relation $\alpha_* \alpha^* = \id_{P_{\tail(\alpha)}}$, this is not a good idea, and we in fact proceed by finding a suitable subalgebra not involving these arrows. Also here one has to be careful: the graded subalgebra generated by paths not involving these arrows is not closed under the differential, so we need to apply a ``smoothening procedure" (see Construction~\ref{const.hat} below) to obtain a dg subalgebra.
\end{remark}

We first need to apply a linear transformation to our generators:

\begin{definition}
A path in the quiver $\overline{Q}$ will be called \emph{basic} if it does not contain $\alpha_*$ -- except possibly as the first morphism, and doesn't contain $\alpha^*$ -- except possibly as the last morphism. In other words a path is basic if it cannot be further simplified by applying the relations.

One immediately sees that the basic paths form a basis for our algebra.

\medskip
The following two constructions are essentially the same as in Construction~\ref{const.red_dec}:

For a basic path $p$, and an arrow $\overline{\alpha}$, we set
\[ p / \overline{\alpha} = \begin{cases} p' & \text{if } p = p' \overline{\alpha} \\ 0 & \text{otherwise.} \end{cases}. \]
This extends to a linear map $- / \overline{\alpha}$ on the algebra.

For a basic path $p$ we set
\[ \red(p) = \begin{cases} 0 & \text{if } p = p' \overline{\alpha} \text{ for some } \alpha \in A \\ p & \text{otherwise.} \end{cases}. \]
This also extends to a linear map on the algebra, which essentially removes all paths starting with an arrow $\overline{\alpha}$ from a linear combination. We call a basic path \emph{reduced} if it does not start with any $\overline{\alpha}$.
\end{definition}

\begin{construction} \label{const.hat}
Now for a reduced basic path $p$ in $\overline{Q}$, we set
\[ \widehat{p} = p + (-1)^{|p|} \sum_{\alpha \in A} ( dp ) / \overline{\alpha} \; \alpha_*. \]
\end{construction}

\begin{lemma} \label{lem.reduction_preserves_diff}
Let $p$ be a (linear combination of) reduced basic paths. Then
\[ d \widehat{p} = \widehat{\red(dp)}. \]
\end{lemma}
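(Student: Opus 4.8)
The plan is to unwind both sides of the claimed identity $d\widehat p = \widehat{\red(dp)}$ using the explicit formula $\widehat p = p + (-1)^{|p|}\sum_{\alpha\in A}(dp)/\overline\alpha\;\alpha_*$ from Construction~\ref{const.hat}, together with the description of $d$ on the generators computed just above: $d\alpha^* = 0$, $dE = 0$, $d\alpha_* = -\overline\alpha$, and $d\overline\varphi = \overline{d\varphi}$ (respectively $-\overline{d\varphi} + \sum_\alpha \alpha^*\overline{\alpha\varphi}$) according to whether $\overline\varphi$ ends in $i^*$ or not. By linearity it suffices to treat a single reduced basic path $p$. Since $p$ is reduced, $p$ itself contributes no $\alpha_*$-correction coming from the left end; the only place correction terms are generated is when $dp$ produces a path that ends in some $\overline\alpha$, which then gets replaced by $-d\alpha_* = \overline\alpha$ under the hat, or conversely.

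First I would compute $d\widehat p$ directly. Applying $d$ to $\widehat p = p + (-1)^{|p|}\sum_\alpha (dp/\overline\alpha)\,\alpha_*$ and using the Leibniz rule plus $d\alpha_* = -\overline\alpha$ gives
\[
d\widehat p = dp + (-1)^{|p|}\sum_\alpha d\bigl(dp/\overline\alpha\bigr)\,\alpha_* + (-1)^{|p|}(-1)^{|dp/\overline\alpha|}\sum_\alpha (dp/\overline\alpha)(-\overline\alpha).
\]
Since $|dp/\overline\alpha| = |dp| - 0 = |p|+1$ (the arrows $\overline\alpha$ have degree $-1$, wait — here $\overline\alpha = \alpha E$ has degree $-1$, so $|dp/\overline\alpha| = |p|+1+1 = |p|$), the sign on the last term works out so that $\sum_\alpha (dp/\overline\alpha)\,\overline\alpha$ reconstitutes exactly the part of $dp$ consisting of paths ending in some $\overline\alpha$. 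Writing $dp = \red(dp) + \sum_\alpha (dp/\overline\alpha)\,\overline\alpha$, the first and third terms combine to leave $\red(dp)$ plus the middle term. So the identity reduces to checking
\[
\red(dp) + (-1)^{|p|}\sum_\alpha d\bigl(dp/\overline\alpha\bigr)\,\alpha_* = \widehat{\red(dp)} = \red(dp) + (-1)^{|\red(dp)|}\sum_\alpha \bigl(d\,\red(dp)\bigr)/\overline\alpha\;\alpha_*.
\]
Note $|\red(dp)| = |p|+1$, so the signs are $(-1)^{|p|}$ on the left and $(-1)^{|p|+1}$ on the right — I would need to be careful that a sign discrepancy is absorbed by $d^2 = 0$ rather than being an error; indeed the key relation is $d^2 p = 0$, i.e. $d(\red(dp)) = -d\bigl(\sum_\alpha (dp/\overline\alpha)\overline\alpha\bigr)$, and expanding the right side via Leibniz ($d\overline\alpha = d(\alpha E) = (d\alpha)E = \overline{d\alpha}$, which is a linear combination of paths not of the form $(\cdots)\overline\beta$ after reduction — this needs an argument) should convert $(d\,\red(dp))/\overline\alpha$ into $-d(dp/\overline\alpha)$ up to the sign that matches.

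The main obstacle, then, is the bookkeeping around which terms of $d(dp/\overline\alpha)$ and $d(\red(dp))$ are themselves of the form $(\text{path})\,\overline\beta$: one must verify that taking $d$ of a path ending in $\overline\alpha$ does not create spurious new $\overline\beta$-terminated paths beyond the one obtained by differentiating strictly to the left of $\overline\alpha$, and that the $/\overline\beta$ operation commutes appropriately with $d$ on the relevant pieces. This rests on the structural fact that $\overline\alpha = \alpha E$ and $d\overline\alpha = \overline{d\alpha}$, where $d\alpha$ is a linear combination of paths in $Q$ of degree $|\alpha|+1 = 1$ starting at $i$; such paths, when passed through $\overline{(\cdot)}$, do not begin with an $\alpha_*$ and — crucially — if they happen to start with some $\overline\beta\in A$ this is exactly the phenomenon the $/\overline\beta\;\alpha_*$ correction is designed to track. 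I would organize this as: (i) the three-term expansion of $d\widehat p$ above; (ii) the reduction to a $d^2=0$ identity; (iii) a short lemma that $d$ and $-/\overline\alpha$ interact on reduced paths via $d(q/\overline\alpha) = \pm(dq)/\overline\alpha$ up to terms that reduce away, proved by inspecting $d\overline\alpha$; and (iv) assembling the signs. Step (iii) is where the real content lies; once it is in place, (iv) is purely mechanical sign-chasing with the degree conventions $|\alpha^*| = |\alpha_*| = 0$, $|\overline\alpha| = -1$.
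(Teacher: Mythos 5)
Your proposal follows essentially the same route as the paper: expand $d\widehat{p}$ by the Leibniz rule, absorb the $\sum_\alpha (dp/\overline{\alpha})\,\overline{\alpha}$ terms into $dp$ to leave $\red(dp)$, and use $d^2p=0$ to identify $(d\,\red(dp))/\overline{\alpha}$ with $-d\bigl((dp)/\overline{\alpha}\bigr)$, with the same sign bookkeeping via $|dp/\overline{\alpha}|=|p|$. The only point to tidy is your step (iii): the worry about $d\overline{\alpha}$ creating spurious terms dissolves because $d\alpha=0$ outright (the differential lowers degree by $1$ and $\alpha$ has degree $0$ in a non-negatively graded algebra -- note your "$|\alpha|+1=1$" has the degree convention backwards), which is exactly what lets the paper write $d\bigl((dp)/\overline{\alpha}\;\overline{\alpha}\bigr)=d\bigl((dp)/\overline{\alpha}\bigr)\overline{\alpha}$ with no correction term.
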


\begin{proof}
Clearly
\[ dp = \red(dp) + \sum_{\alpha \in A} dp / \overline{\alpha} \; \overline{\alpha} \text{ and } d(dp) = 0, \]
whence
\[ d(\red(dp)) = - \sum_{\alpha \in A} d( (dp) / \overline{\alpha} \; \overline{\alpha}) = - \sum_{\alpha \in A} d( (dp) / \overline{\alpha} ) \overline{\alpha} \]
and
\[ d(\red(dp)) / \overline{\alpha} = - d ( (dp) / \overline{\alpha}). \]
It follows that
\begin{align*}
\widehat{\red(dp)} & = \red(dp) + (-1)^{|\red(dp)|} \sum_{\alpha \in A} d(\red(dp)) / \overline{\alpha} \; \alpha_* \\
& = \red(dp) - (-1)^{| dp |} \sum_{\alpha \in A} d( (dp) / \overline{\alpha}) \alpha_* \\
& = \red(dp) + (-1)^{|p|} \sum_{\alpha \in A} d( (dp) / \overline{\alpha}) \alpha_* .
\end{align*}
On the other hand
\begin{align*}
d \widehat{p} & = dp + (-1)^{|p|} \sum_{\alpha \in A} d((dp) / \overline{\alpha} \; \alpha_*) \\
& = dp + (-1)^{|p|} \sum_{\alpha \in A} \left( d((dp) / \overline{\alpha}) \alpha_* + (-1)^{|dp / \overline{\alpha}|} (dp) / \overline{\alpha} \;  (- \overline{\alpha}) \right) \\
& = dp - \sum_{\alpha \in A} (dp) / \overline{\alpha} \; \overline{\alpha} +  (-1)^{|p|} \sum_{\alpha \in A} d((dp) / \overline{\alpha}) \alpha_* \\
& \qquad \qquad \text{(here we use $|dp/\overline{\alpha}| = |dp| - |\overline{\alpha}| = |p| - 1 - (-1) = |p|$)} \\
& = \red(dp) + (-1)^{|p|} \sum_{\alpha \in A} d( dp / \overline{\alpha} ) \alpha_*. \qedhere
\end{align*}
\end{proof}

\begin{theorem}
Let $\Gamma_{\rm red}$ be the subvector space of $\Gamma$ generated by the $\widehat{p}$, where $p$ is a reduced basic path in $\overline{Q}$.

Then $\Gamma_{\rm red}$ is a dg subalgebra of $\Gamma$, and the inclusion $\Gamma_{\rm red} \embed \Gamma$ is a quasi-isomorphism.
\end{theorem}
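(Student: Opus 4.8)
The plan is to verify three things in turn: that $\Gamma_{\rm red}$ is closed under $d$, that it is closed under multiplication, and that the inclusion is a quasi-isomorphism. Closure under $d$ is immediate from Lemma~\ref{lem.reduction_preserves_diff}: for a reduced basic path $p$ one has $d\widehat p = \widehat{\red(dp)}$, and $\red(dp)$ — the element obtained from $dp \in \Gamma$ by deleting every basic path that starts with some $\overline\alpha$ — is a linear combination of reduced basic paths, so $d\widehat p \in \Gamma_{\rm red}$.

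For closure under multiplication I would first prove the intrinsic description
\[ \Gamma_{\rm red} = \bigl\{\, w \in \Gamma : \red(w) = w \text{ and } \red(dw) = dw \,\bigr\}. \]
Write $W$ for the span of the reduced basic paths (so $\red$ is the identity on $W$) and, inside $W$, let $U$ be the span of those reduced basic paths that start with some $\alpha_*$. The inclusion ``$\subseteq$'' holds because $\Gamma_{\rm red} \subseteq W$ and $dw \in \Gamma_{\rm red} \subseteq W$ by the previous paragraph. For ``$\supseteq$'', take $w \in W$ with $dw \in W$ and split $w = z + \eta$ with $\eta \in U$ and $z$ a combination of reduced basic paths not starting with any $\alpha_*$; then $\widehat z \in \Gamma_{\rm red}$, and since $\widehat p \equiv p$ modulo $U$ we get $\zeta := w - \widehat z \in U$, while $d\zeta = dw - d\widehat z \in W$. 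So it suffices to see that $\zeta = \sum_j c_j\, z_j\alpha_* \in U$ with $d\zeta \in W$ must vanish; but $d(z_j\alpha_*) = (dz_j)\alpha_* - (-1)^{|z_j|}z_j\overline\alpha$ by $d\alpha_* = -\overline\alpha$, so the part of $d\zeta$ made of basic paths starting with some $\overline\alpha$ is $-\sum_j c_j(-1)^{|z_j|}z_j\overline\alpha$, a combination of pairwise distinct basic paths, and its vanishing forces all $c_j = 0$.

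Granting this description, multiplicativity reduces to the purely combinatorial claim that $W$ itself is closed under multiplication. This holds because the relations $\alpha_*\alpha^* = \id$, $\alpha_*\varphi = 0$, $\varphi\alpha^* = 0$ only delete letters from (or annihilate) a word, while in any composable word an arrow $\overline\alpha$ can never stand immediately to the left of an arrow $\alpha'_*$ (their endpoints do not match) nor be removed by the first relation; hence reducing a product of two reduced basic paths — whose right-hand end comes from a word not starting with any $\overline\alpha$ — can never produce a basic path that starts with $\overline\alpha$. Given this, for $x, y \in \Gamma_{\rm red} \subseteq W$ we have $xy \in W$, i.e. $\red(xy) = xy$, and $d(xy) = (dx)y + (-1)^{|x|}x(dy)$ with $dx, dy \in \Gamma_{\rm red} \subseteq W$, so $d(xy) \in W$, i.e. $\red(d(xy)) = d(xy)$; by the description $xy \in \Gamma_{\rm red}$, so $\Gamma_{\rm red}$ is a dg subalgebra.

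Finally, since $\Gamma_{\rm red}$ is a subcomplex, the inclusion is a quasi-isomorphism as soon as the quotient complex $\Gamma/\Gamma_{\rm red}$ is acyclic, and I would establish this by writing down a contracting homotopy. As graded vector spaces $\Gamma = V \oplus \Gamma_{\rm red} \oplus U$, where $V$ is the span of the basic paths starting with some $\overline\alpha$ and $U$ is as above: indeed $\Gamma = V \oplus W$, and inside $W$ the elements $\widehat p$ with $p$ reduced basic not starting with $\alpha_*$ are a triangular modification of those same paths and hence span a complement to $U$, so $W = \Gamma_{\rm red} \oplus U$. The subspace $V$ is a subcomplex because $d\overline\alpha = 0$, so on $\Gamma/\Gamma_{\rm red} = V \oplus U$ the differential is $d(z\overline\alpha) = (dz)\overline\alpha \in V$ and $d(z\alpha_*) = (dz)\alpha_* - (-1)^{|z|}z\overline\alpha$, the two summands lying in $U$ and $V$ respectively. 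Setting $h(z\overline\alpha) = (-1)^{|z|+1}z\alpha_*$ on basic paths starting with some $\overline\alpha$ and $h = 0$ on those starting with some $\alpha_*$, a short computation with these two formulas and $(-1)^{|z|} + (-1)^{|z|+1} = 0$ gives $dh + hd = \id$ on $\Gamma/\Gamma_{\rm red}$, so this complex is contractible, hence acyclic, and $\Gamma_{\rm red} \embed \Gamma$ is a quasi-isomorphism. The only step I expect to be genuinely delicate is the combinatorial verification that $W$ is closed under multiplication — carefully tracking the cascade of deletions the three relations may trigger at the junction of the two paths; the homotopy identity and the intrinsic description are then routine manipulations with the definition of $\widehat{\cdot}$ and Lemma~\ref{lem.reduction_preserves_diff}.
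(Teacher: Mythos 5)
Your proof is correct, and its skeleton (closure under $d$ via Lemma~\ref{lem.reduction_preserves_diff}, closure under multiplication, acyclicity of the quotient) matches the paper's; but two of the three steps are carried out by genuinely different arguments. For multiplicativity the paper simply observes that $\widehat{\cdot}$ only modifies the rightmost arrow and writes $\widehat{p}\,\widehat{q}=\widehat{\,\widehat{p}q\,}\in\Gamma_{\rm red}$, whereas you first establish the intrinsic characterization $\Gamma_{\rm red}=\{w: \red(w)=w,\ \red(dw)=dw\}=W\cap d^{-1}(W)$ and then reduce everything to the purely combinatorial fact that the span $W$ of reduced basic paths is closed under multiplication. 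Your route costs an extra lemma (that $U\cap d^{-1}(W)=0$, which your linear-independence argument with $d(z\alpha_*)=(dz)\alpha_*-(-1)^{|z|}z\overline{\alpha}$ handles correctly), but it buys a basis-free description of $\Gamma_{\rm red}$ and cleanly isolates the one genuinely combinatorial input -- namely that cancelling $\alpha_*\alpha^*=\id$ at the right end of a product can only expose an arrow composable with $\alpha_*$, hence never an $\overline{\beta}$ (whose source is $P_i^*$); this is exactly the point the paper's one-line identity quietly relies on as well, since it needs $\widehat{p}q$ to be a combination of reduced basic paths. For the quasi-isomorphism the paper contents itself with ``since $d\alpha_*=-\overline{\alpha}$ it is easy to see that the cokernel is acyclic''; your explicit contracting homotopy $h(z\overline{\alpha})=(-1)^{|z|+1}z\alpha_*$, $h|_U=0$ on $\Gamma/\Gamma_{\rm red}\cong V\oplus U$ is precisely the computation being alluded to, and your verification of $dh+hd=\id$ checks out. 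The only cosmetic remark: some of your generators $\widehat{p}$ with $p$ starting in $\alpha_*$ are actually zero (one computes $\widehat{q\alpha_*}=0$), which is why $W=\Gamma_{\rm red}\oplus U$ is a direct sum as you assert; this is consistent with, and slightly sharpens, your triangularity argument.
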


\begin{proof}
Clearly for reduced paths the construction $\widehat{\cdot}$ only affects the rightmost arrow, so
\[ \widehat{p} \widehat{q} = \widehat{ \, \widehat{p} q \, } \in \Gamma_{\rm red} \]
for any non-trivial reduced path $q$. Therefore $\Gamma_{\rm red} $ is a subalgebra of $\Gamma$.

It follows from Lemma~\ref{lem.reduction_preserves_diff} that $\Gamma_{\rm red}$ it is also closed under the differential.

Now consider the quotient $\Gamma / \Gamma_{\rm red}$. Clearly it is generated by reduced paths starting with either $\alpha_*$ or $\overline{\alpha}$. Since $d \alpha_* = - \overline{\alpha}$ is is easy to see that this cokernel is acyclic.
\end{proof}

Note that as a quiver with relations, $\Gamma_{\rm red}$ is given by the following arrows and relations. Here, by abuse of notation, we write $\widehat{\varphi}$ instead of $\widehat{\overline{\varphi}}$ for arrows $\varphi \in Q$.
\begin{itemize}
\item an arrow $\widehat{\varphi}$ for any arrow $\varphi$ of $Q$ which is not in $A$;
\item the arrows $\alpha^*$ as in $\overline{Q}$;
\item an arrow $\widehat{\alpha \varphi}$ for any $? \to^{\overline{\varphi}} i \to^{\overline{\alpha}} ?$ in $\overline{Q}$.
\end{itemize}
subject to the relations
\begin{align*}
\widehat{\varphi} \alpha^* & = \underbrace{\overline{\varphi} \alpha^*}_{ = 0} + (-1)^{|\overline{\varphi}|} \sum_{\beta \in A} (d \overline{\varphi}) / \overline{\beta} \; \beta_* \alpha^* \\
& = (-1)^{|\overline{\varphi}|} { (d \overline{\varphi}) / \overline{\alpha}} \\
& = \begin{cases} - (-1)^{|\varphi|} \overline{(d \varphi) / \alpha } & \text{ if } \ta{\varphi} \neq i \\ (-1)^{|\varphi|} (- \overline{d \varphi} + \sum_{\beta \in A} \beta^* \overline{\beta \varphi} ) / \overline{\alpha} & \text{ if } \ta{\varphi} = i \end{cases} \\
& = - (-1)^{|\varphi|} \widehat{ (d \varphi) / \alpha } \\
\intertext{and}
\widehat{\alpha \varphi} \beta^* & = - (-1)^{|\varphi|} \widehat{ \alpha \; (d \varphi) / \beta }.
\end{align*}

\subsection*{Cancelling anti-compositions}

As a last step towards the proof of Theorem~\ref{thm.main_result}, we simplify the resulting algebra $kM$.

\begin{proposition}
The dg-algebra $(kM, \partial)$ described in Section~\ref{sec.mutation}, is quasi-isomorphic to its dg-quotient algebra generated by arrows of the forms
\[ \varphi, \alpha \varphi, \alpha^* \]
subject to the relations
\[ \varphi \alpha^* = - (-1)^{|\varphi|} (d \varphi) / \alpha \text{, and } \alpha \varphi \beta^* = - (-1)^{|\varphi|} \alpha (d \varphi) / \beta. \]
\end{proposition}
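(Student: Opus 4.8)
The plan is to realise the dg-quotient in the statement as $kM/J$, where $J\subseteq kM$ is the two-sided dg-ideal generated by all the anti-composition arrows $\varphi\alpha^{-1}$ (from Step 3) and $\alpha\varphi\beta^{-1}$ (from Step 4); this is by construction the smallest dg-ideal whose quotient is generated by $\varphi$, $\alpha\varphi$ and $\alpha^{*}$ alone. Two things then have to be proved: that $kM/J$ has exactly the stated presentation, and that the quotient map $kM\to kM/J$ is a quasi-isomorphism. Since $kM$ is the free path algebra on $M$, the quotient $kM/J$ is the path algebra on the arrows $\varphi$ (for $\varphi\in Q_1\setminus A$), $\alpha\varphi$ and $\alpha^{*}$, modulo the ideal generated by the images of $\partial(\varphi\alpha^{-1})$ and $\partial(\alpha\varphi\beta^{-1})$.

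For the presentation, the key observation is that modulo $J$ the operator $\dec$ becomes trivial, because it only ever inserts the symbols $\alpha^{-1}$, all of which die in $kM/J$; and likewise $\partial(\varphi)\alpha^{-1}\in J$, since every term of it contains an anti-composition arrow. Reading $\partial(\varphi\alpha^{-1})$ off Definition~\ref{def.new_diff} therefore gives $\partial(\varphi\alpha^{-1})\equiv(-1)^{\|\varphi\|}\varphi\alpha^{*}-(-1)^{\ci{\ta\varphi}}(d\varphi)/\alpha\pmod J$, and since here $\st\varphi=i$ we have $\|\varphi\|=|\varphi|-1+\ci{\ta\varphi}$, so $(-1)^{\|\varphi\|+\ci{\ta\varphi}}=(-1)^{|\varphi|-1}=-(-1)^{|\varphi|}$; this turns the vanishing of that element into the relation $\varphi\alpha^{*}=-(-1)^{|\varphi|}(d\varphi)/\alpha$, and the identical bookkeeping for $\partial(\alpha\varphi\beta^{-1})$ produces $\alpha\varphi\beta^{*}=-(-1)^{|\varphi|}\alpha(d\varphi)/\beta$. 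That these are \emph{all} the relations follows from a diamond-lemma argument: the corresponding rewriting rule rewrites a subword $\varphi\alpha^{*}$ (with $\varphi$ a rotated arrow out of $i$) as a combination of words with strictly fewer such subwords — none are created, in particular none at the junctions, because $d\varphi/\alpha$ involves no $*$-arrows and a degree-$0$ loop at $i$ is excluded — while two occurrences of the pattern $\varphi\alpha^{*}$ can never overlap, a $*$-arrow never being a rotated arrow out of $i$; so the rewriting is confluent and terminating and the reduced paths form a basis of $kM/J$.

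For the quasi-isomorphism we must show that $J$ is acyclic; the long exact sequence of $0\to J\to kM\to kM/J\to0$ then finishes the proof. The mechanism is parallel to the subsection ``Removing degree $-1$'': there the identity $d\alpha_{*}=-\overline{\alpha}$ let one cancel the degree-$(-1)$ arrows against the $\alpha_{*}$, and here each anti-composition arrow is cancelled against a product of ordinary arrows by means of $\partial(\varphi\alpha^{-1})=(-1)^{\|\varphi\|}\varphi\alpha^{*}+(\text{correction terms})$ and its Step 4 analogue. Concretely, the plan is to filter $J$ so that the induced differential on the associated graded is exactly this leading pairing $\varphi\alpha^{-1}\mapsto\pm\varphi\alpha^{*}$ (together with $\alpha\varphi\beta^{-1}\mapsto\pm\alpha\varphi\beta^{*}$), which is a direct sum of contractible two-term complexes, and then to deduce acyclicity of $J$; alternatively one writes down a contracting homotopy on $J$ that reinserts a leading factor $\varphi\alpha^{*}$ as $\varphi\alpha^{-1}$. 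I expect the real work — and the main obstacle — to be setting up this filtration correctly: $\partial$ does \emph{not} respect the naive count of anti-composition symbols, since $\dec$ \emph{creates} such symbols out of ordinary arrows of $Q$ while the leading term of $\partial$ on an anti-composition arrow \emph{removes} one, so the filtration (or homotopy) has to be arranged so that the $\dec$-contributions, together with the $\alpha_{*}\alpha^{*}$-type collapses, are genuinely of lower order — the same delicate point that made Construction~\ref{const.hat} and Lemma~\ref{lem.reduction_preserves_diff} necessary in the degree-$(-1)$ case.
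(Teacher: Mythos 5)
Your overall route is the paper's: pass to the quotient by the dg-ideal $J$ generated by the anti-composition arrows, note that $\dec$ becomes the identity and $\partial(\varphi)\alpha^{-1}$ dies modulo $J$, and read off the relations. Your sign computation $\|\varphi\|+\ci{\ta{\varphi}}\equiv|\varphi|+1\pmod 2$ is exactly the one in the paper, and your presentation/confluence remarks (which the paper omits) are correct, including the role of the hypothesis that $\alpha$ is not a loop, which gives $\tail(\alpha)\neq i$ and prevents new instances of the rewritten pattern from being created at junctions. The genuine gap is the one you flag yourself: the quasi-isomorphism. You correctly observe that the naive count of anti-composition symbols is not compatible with $\partial$, because $\dec$ creates such symbols, but you leave the filtration unconstructed, and without it nothing is proved -- note that ``quotient by the dg-ideal generated by an arrow and its differential'' is \emph{not} a quasi-isomorphism in general (one arrow $a$ in degree $0$ and one arrow $x$ with $\partial x=a^{2}$ already gives a counterexample), so some structural input specific to this situation is indispensable.

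The observation that closes the gap -- and which is essentially the entire content of the paper's proof -- is that every anti-composition symbol occurring in $\partial(\varphi\alpha^{-1})$ is of the form $\psi\beta^{-1}$ with $|\psi|$ \emph{strictly smaller} than $|\varphi|$. This is automatic: such symbols arise either from $\partial(\varphi)\alpha^{-1}$ or from the decorations inside $\dec(d\varphi/\alpha)$, and in both cases the arrow $\psi$ acquiring the $\beta^{-1}$ is an arrow occurring in $d\varphi$, hence of degree at most $|\varphi|-1$. The filtration you are looking for is therefore simply by the degree $|\varphi|$ of the arrow underlying each anti-composition symbol: cancelling the $\varphi\alpha^{-1}$ and $\alpha\varphi\beta^{-1}$ in order of increasing $|\varphi|$, at each stage the differential of the arrow being removed contains no surviving anti-composition arrows, its leading term $\pm\varphi\alpha^{*}$ pairs the basis paths through $\varphi\alpha^{-1}$ bijectively with the basis paths through the subword $\varphi\alpha^{*}$, and the associated graded of $J$ decomposes into contractible two-term complexes; the filtration is bounded in each homological degree because all arrows of $M$ have non-negative degree. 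With that one degree count inserted, your argument is complete and coincides with the paper's.
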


\begin{proof}
We observe that $\partial( \varphi \alpha^{-1}) = (-1)^{\| \varphi \|} \varphi \alpha^* - (-1)^{\ci{ \ta{\varphi}}} d \varphi / \alpha$ up to terms $\psi \beta^{-1}$ with $\psi$ of strictly smaller degree. Thus, cancelling the $\varphi \alpha^{-1}$ for their differential, we obtain relations
\[ (-1)^{\| \varphi \|} \varphi \alpha^* - (-1)^{\ci{ \ta{\varphi}}} d \varphi / \alpha. \]
Now note that
\[ \| \varphi \| = |\varphi| - \underbrace{\ci{ \st{\varphi}}}_{=1} + \ci{ \ta{\varphi}}, \]
so the above relation gives the first one of the proposition.

The argument involving arrows $\alpha \varphi \beta^{-1}$ and leading to the second relation of the proposition is similar.
\end{proof}

\section{Well-definedness of the new differential} \label{sec.is_diff}

The aim of this section is to prove the following theorem.

\begin{theorem} \label{thm.is_diff}
Let $\partial$ be the map on $kM$ as defined in Definition~\ref{def.new_diff}. Then
\[ \partial^2 = 0. \]
\end{theorem}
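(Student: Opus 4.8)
The statement $\partial^2 = 0$ must be checked on each type of generating arrow of $M$: the $\varphi$ (for $\varphi \in Q_1 \setminus A$, split according to whether $\ta\varphi = i$ or not), the composition arrows $\alpha\varphi$, the reversed arrows $\alpha^*$, the anti-compositions $\varphi\alpha^{-1}$, and the two-sided combinations $\alpha\varphi\beta^{-1}$. Since $\partial$ is defined to satisfy the Leibniz rule, once it squares to zero on arrows it squares to zero everywhere. The case $\alpha^*$ is trivial ($\partial\alpha^* = 0$). The cases for $\varphi$ and $\alpha\varphi$ should be handled first, as they feed into the harder cases; there the main input is the relation $d^2 = 0$ in the original dg algebra $kQ$, transported through the operators $\red$ and $\dec$.

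First I would establish the basic ``commutation'' lemmas for the auxiliary operations of Construction~\ref{const.red_dec}: how $\red$ and $\dec$ interact with $d$ (on $kQ$), with left/right multiplication by arrows, and with the splitting $p = \red(p) + \sum_\alpha (p/\alpha)\,\alpha$. The key identities I expect to need are (i) $\red(d(\red(p)))$ versus $\red(dp)$ -- these differ by correction terms coming from paths of $p$ that start with some $\alpha \in A$, i.e.\ by $\sum_\alpha \red(d((p/\alpha)\alpha))$; and (ii) $\dec$ applied to a product, which is multiplicative in a suitable sense because $\Delta = 1 - \sum_\alpha \alpha^{-1}\alpha$ is inserted independently at each legal passage through $i$. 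I would also want the compatibility $\dec(d\varphi) $ vs.\ $d(\dec\varphi)$ type statement -- but $\dec$ lives on $kM$, not $kQ$, so really what is needed is how $\partial$ interacts with the symbol $\Delta$: note $\partial(\Delta)$ should be computed, and one finds $\Delta$ is ``$\partial$-closed up to'' the terms $\alpha^*\alpha$ that appear in $\partial(\varphi)$ when $\ta\varphi = i$. This is the structural heart of why the formulas work: the term $\sum_\alpha \alpha^*\alpha\varphi$ in $\partial(\varphi|_{\ta\varphi=i})$ is exactly engineered so that decorations are respected by $\partial$.

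With those lemmas in hand, the verification for $\varphi$ with $\ta\varphi \neq i$ reduces $\partial^2\varphi = \partial\dec\red d\varphi$ to an expression in $\dec\red d(d\varphi) = \dec\red(0) = 0$ plus correction terms, which must cancel in pairs; the correction terms are precisely the ones accounted for by the $\alpha\varphi$ arrows and their differential $\partial(\alpha\varphi) = \dec(\alpha\red(d\varphi))$, so one checks $\sum_\alpha \alpha^* \partial(\alpha\varphi)$-type contributions cancel the discrepancy. The case $\ta\varphi = i$ is similar but with the extra $\sum_\alpha \alpha^*\alpha\varphi$ term, whose $\partial$ must be computed using $\partial(\alpha^*) = 0$ and the already-verified $\partial(\varphi)$ formula; here the sign bookkeeping is delicate. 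For $\alpha\varphi$ one uses $\partial(\alpha\varphi) = \dec(\alpha\red(d\varphi))$ and again $d^2\varphi = 0$. The genuinely laborious cases are $\varphi\alpha^{-1}$ and $\alpha\varphi\beta^{-1}$: here $\partial$ has three terms each, so $\partial^2$ has nine, and one must group them and invoke the previously-verified identities $\partial^2\varphi = 0$, $\partial^2(\alpha\varphi) = 0$, the formula for $\partial(\varphi\alpha^{-1})$ recursively (the $\partial(\varphi)\alpha^{-1}$ term regenerates lower-degree anti-compositions), and the interaction of $d\varphi/\alpha$ with $\dec$ and with further differentiation. I would organize this as an induction on $\|\varphi\|$, since $\partial(\varphi\alpha^{-1})$ involves $\partial(\varphi)\alpha^{-1}$ and $\partial(\varphi)$ is a combination of things of strictly smaller degree when expressed in $M$'s arrows.

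**The main obstacle.** The hard part will be the sign bookkeeping in the anti-composition cases, particularly matching the sign $(-1)^{\|\varphi\|}$ on the $\varphi\alpha^*$ term against $(-1)^{\ci{\ta\varphi}}$ on the $\dec(d\varphi/\alpha)$ term after a further application of $\partial$, since $\|\varphi\|$ itself depends on whether $\st\varphi$ and $\ta\varphi$ equal $i$. I expect the cleanest route is to first prove a single ``master identity'' expressing $\partial(\red(p))$ and $\partial$ of decorated things in terms of $\dec\red d$ applied to $p$ plus explicit $\alpha$-correction terms (valid for all paths $p$ in $Q$, by Leibniz), and then feed $p = d\varphi$ (using $d^2 = 0$) into it; this isolates all the combinatorics into one lemma and makes the five arrow-type checks into short formal consequences. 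Getting the statement of that master lemma exactly right -- with all signs -- is where essentially all the work lies.
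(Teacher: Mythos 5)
Your plan coincides with the paper's proof in all essentials: the verification is done arrow type by arrow type via the Leibniz rule, and the ``master identity'' you anticipate -- extending the formulas for $\partial(\varphi)$ and $\partial(\varphi\alpha^{-1})$ from arrows to decorated paths $\dec(p)$ and $\dec(p)\alpha^{-1}$, with the $\sum_\alpha \alpha^*\alpha(-)$ correction term, proved by splitting paths as products -- is exactly the Lemma the paper proves before feeding in $p = d\varphi$ and invoking $d^2=0$ together with the observation that $d$ of a path starting in $A$ again starts in $A$. Since this is by nature a brute-force computation, your outline identifies the correct structure and the correct key lemma, so it is essentially the same proof.
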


This is essentially checked by brute force calculation. We first prepare some intermediate result, which essentially says that (versions of) the formulas in Definition~\ref{def.new_diff} also hold for paths, not only arrows.

\begin{lemma} Let $p$ be any linear combination of paths in $Q$ not starting with arrows in $A$. Then
\[ \partial( \dec p ) = \begin{cases} \dec ( \red (dp)) & \text{ if } \ta{p} \neq i \\ \sum_{\alpha \in A} \alpha^* \alpha \dec(p) - \dec ( \red (dp)) & \text{ if } \ta{p} = i \end{cases}, \]
and
\[ \partial( \dec(p) \alpha^{-1} ) = \partial( \dec p ) \alpha^{-1} + (-1)^{\| p \|} \dec(p) \alpha^* - (-1)^{\ci{ \ta{\varphi}}} \dec (dp / \alpha). \]
\end{lemma}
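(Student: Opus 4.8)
\emph{Proposed approach.}
Both identities are established by a simultaneous induction on the length of $p$. Since $\partial$, $\dec$, $\red$, $d$ and each map $(-)/\alpha$ are linear, it suffices to treat a single path $p$; if $p$ is trivial both sides vanish, and if $p=\varphi$ is a single arrow of $Q_1\setminus A$ the two formulas reduce, via $\dec(\varphi)=\varphi$, to the defining formulas for $\partial(\varphi)$ and $\partial(\varphi\alpha^{-1})$ in Definition~\ref{def.new_diff}. In the inductive step write $p=\varphi q$, where $\varphi$ is the leftmost (last-composed) arrow of $p$ and $q$ is a strictly shorter path; as $q$ has the same first arrow as $p$ it again does not begin with an arrow of $A$, so the inductive hypothesis applies to it in both forms.

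For the first identity I would split according to the junction between $\varphi$ and $q$, which is exactly the datum governing whether $\dec$ inserts a copy of $\Delta=1-\sum_{\alpha\in A}\alpha^{-1}\alpha$ there. If $\st{\varphi}\neq i$, then $\dec(\varphi q)=\varphi\,\dec(q)$ with $\varphi$ an honest arrow of $M$: apply the graded Leibniz rule to $\partial(\varphi\,\dec q)$, substitute the base-case value of $\partial(\varphi)$ and the inductive hypothesis for $\partial(\dec q)$, and regroup using $d(\varphi q)=(d\varphi)q+(-1)^{|\varphi|}\varphi(dq)$ and the fact that $\red$ commutes with left multiplication by arrows of $Q$. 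If $\st{\varphi}=i$ but $\varphi\in A$, again $\dec(\varphi q)=\varphi\,\dec(q)$, except that in $M$ the leading $\varphi$ is absorbed with the first arrow of $q$ into a composition arrow; the computation is the same after this rebracketing, now using $\partial(\alpha\psi)=\dec(\alpha\,\red(d\psi))$. The essential case is $\st{\varphi}=i$ with $\varphi\in Q_1\setminus A$, so $\ta{q}=i$ and $\dec(\varphi q)=\varphi\,\Delta\,\dec(q)=\varphi\,\dec(q)-\sum_{\alpha\in A}(\varphi\alpha^{-1})(\alpha\,\dec q)$. Applying $\partial$ termwise, using Definition~\ref{def.new_diff} for $\partial(\varphi)$, $\partial(\varphi\alpha^{-1})$, $\partial(\alpha\psi)$ and the inductive hypothesis for $q$ (which, since $\ta{q}=i$, always contributes $\sum_{\beta}\beta^{*}\beta\,\dec(q)-\dec\red(dq)$), three things happen: (a) the terms $(-1)^{\|\varphi\|}\sum_{\beta}\varphi\beta^{*}\beta\,\dec(q)$ from that contribution cancel the terms produced by the $(-1)^{\|\varphi\|}\varphi\alpha^{*}$ summand of $\partial(\varphi\alpha^{-1})$; (b) matched pairs of the shape $X\cdot\dec(q)$ and $-X\cdot\alpha^{-1}\alpha\cdot\dec(q)$ recombine into $X\,\Delta\,\dec(q)$, installing the decoration at every newly created internal passage through $i$; (c) the terms built from $\dec(d\varphi/\alpha)$ supply precisely the summands of $\red\bigl((d\varphi)q\bigr)$ that come from paths of $d\varphi$ beginning with an arrow of $A$ (these do not appear in $\red(d\varphi)\cdot q$). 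If $\varphi$ is a loop at $i$, the extra $\sum_{\beta}\beta^{*}\beta\varphi$ in $\partial(\varphi)$ is absorbed the same way, and what remains is exactly $\dec\red\bigl(d(\varphi q)\bigr)$, or its $\ta{p}=i$ variant.

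For the second identity I would proceed analogously: attaching $\alpha^{-1}$ to the leading arrow $\psi\in Q_1\setminus A$ of $p$ turns $\dec(p)\alpha^{-1}$ into a path beginning with the anti-composition arrow $\psi\alpha^{-1}$ (or, if $\psi$ is a loop at $i$, with arrows $\beta\psi\alpha^{-1}$ coming from the $\Delta$ immediately after $\psi$); applying the Leibniz rule with Definition~\ref{def.new_diff} for $\partial(\psi\alpha^{-1})$ (resp.\ $\partial(\beta\psi\alpha^{-1})$) and invoking the inductive hypothesis together with the first identity gives the formula. The term $-(-1)^{\ci{\ta{p}}}\dec(dp/\alpha)$ is the $\dec$-image of the $-(-1)^{\ci{\ta{\varphi}}}\dec(d\varphi/\alpha)$-type contributions.

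The step I expect to be the genuine obstacle is the essential case above: it forces one to handle several simultaneous cancellations, and above all the signs — the gap between $(-1)^{\|\cdot\|}$ and $(-1)^{|\cdot|}$ is controlled by the characteristic function $\ci{\cdot}$ of $i$ at the relevant source and target, and these have to be reconciled at every passage through $i$. Conceptually this is the verification that the symbol $\Delta$ in the definition of $\dec$ was designed precisely so that $\partial$ is intertwined along $\dec$ with $\dec\circ\red\circ d$, up to the sign twist at $i$; once the bookkeeping is organised it is long but mechanical.
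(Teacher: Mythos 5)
Your proposal is correct and follows essentially the same route as the paper: induction on the length of $p$ via the graded Leibniz rule at a junction, with a case split on whether that junction sits at $i$ (where $\dec$ inserts $\Delta$), so that the $\alpha^*\alpha$-terms and the $\dec(d\varphi/\alpha)$-terms either cancel or reassemble into $\dec(\red(dp))$. The only cosmetic difference is that you peel off the leftmost arrow ($p=\varphi q$) while the paper splits $p=p_2p_1$ into two arbitrary shorter subpaths and invokes both inductive hypotheses for each half; both versions rely tacitly on the same auxiliary identity $\partial(\alpha\,\dec(q))=\alpha\,\dec(\red(dq))$ for composition-type paths, which deserves (in either write-up) at least a one-line justification.
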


\begin{proof}
Clearly it suffices to check this for $p$ a path. Moreover, if $p$ is an arrow $\varphi$ or of the form $\alpha \varphi$ then the claims hold by definition. We now use induction on the length of the path $p$. Write $p$ as $p = p_2 p_1$ with $p_1$ and $p_2$ paths of shorter length, not starting with arrows in $A$.

We start by checking the first claim, assuming that both claims hold for shorter paths. Assume first that $\ta{p_1} = \st{p_2} =i$. In that case $\dec(p) = \dec(p_2) \Delta \dec(p_1)$, and we have
\begin{align*}
\partial(\dec(p)) & = \partial(\dec(p_2) \dec(p_1)) - \partial ( \sum_{\alpha \in A} \dec (p_2) \alpha^{-1} \alpha \dec(p_1) ) \\
& = \partial(\dec(p_2)) \dec(p_1) \\
& \qquad + (-1)^{\| p_2 \|} \dec(p_2) \Big( \sum_{\alpha \in A} \alpha^* \alpha \dec(p_1) - \dec(\red(dp_1)) \Big) \\
& \qquad - \sum_{\alpha \in A} \Big( \partial( \dec( p_2)) \alpha^{-1} + (-1)^{\|p_2\|} \dec(p_2) \alpha^* \\
& \qquad \qquad - (-1)^{\ci{\ta{p_2}}} \dec((dp_2) / \alpha) \Big) \alpha \dec(p_1) \\
& \qquad - (-1)^{\| p_2 \| + 1} \sum_{\alpha \in A} \dec(p_2) \alpha^{-1} \alpha \dec( \red( d p_1 )) \\
& = \partial(p_2) \Delta \dec( p_1) - (-1)^{\| p_2 \|} \dec(p_2) \Delta \dec( \red( d p_1)) \\
& \qquad + (-1)^{\ci{\ta{p_2}}} \dec((dp_2) / \alpha) \alpha \dec(p_1) \\
\intertext{Inserting the formula for $\partial(\dec(p_2))$ into the two above (and allowing ourselves to use square brackets around terms that only appear in case $\ta{p} = i$) this gives}
& = \Big( \Big[ \sum \alpha^* \alpha \dec(p_2) \Big] + (-1)^{\ci{ \ta{p}}} \dec(\red(dp_2)) \Big) \Delta \dec( p_1) \\
& \qquad + (-1)^{| p_2 | + \ci{ \ta{p}}} \dec(p_2) \Delta \dec( \red( d p_1)) \\
& \qquad + (-1)^{\ci{ \ta{p}}} \dec((dp_2) / \alpha) \alpha \dec(p_1) \\
& = \Big[ \sum \alpha^* \alpha \dec(p) \Big] \\
& + (-1)^{\ci{ \ta{p}}} \dec \Big( (\underbrace{\red(dp_2) + \sum_{\alpha \in A} (d p_2) / \alpha \; \alpha}_{= dp_2}) p_1 + (-1)^{|p_2|} p_1 dp_2 \Big) \\
& = \Big[ \sum \alpha^* \alpha \dec(p) \Big] + (-1)^{\ci{ \ta{p}}} \dec( \red( dp))
\end{align*}
For the case $\st{p_2} = \ta{p_1} \neq i$ we first obtain
\begin{align*}
\partial( \dec(p) ) & = \partial(\dec(p_2)) \dec(p_1) + (-1)^{\| p_2 \|} \dec(p_2) \dec( \red( dp_1)), \\
\intertext{and, inserting for $\partial(\dec(p_2))$}
& = \Big( \Big[ \sum \alpha^* \alpha \dec(p_2) \Big] + (-1)^{\ci{ \ta{p}}} \dec(\red(dp_2)) \Big) \dec(p_1) \\
& \qquad + (-1)^{|p_2| + \ci{ \ta{p}}} \dec(p_2) \dec(\red(d p_1)) \\
& = \Big[ \sum \alpha^* \alpha \dec(p) \Big] + (-1)^{\ci{ \ta{p}}} \dec( \red( dp))
\end{align*}

Now we check the second claim. We only check the case that $\st{p_2} = \ta{p_1} = i$. The calculation in the other case is similar but simpler.
\begin{align*}
\partial( \dec(p) \alpha^{-1} ) & = \partial( \dec(p_2) \Delta \dec(p_1) \alpha^{-1} ) \\
& = \partial( \dec(p_2) ) \dec(p_1) \alpha^{-1} + (-1)^{\|p_2\|} \dec(p_2) \partial(\dec(p_1) \alpha^{-1}) \\
& \qquad - \sum_{\beta \in A} \Big( \partial(\dec(p_2) \beta^{-1}) \beta \dec(p_1) \alpha^{-1} \\
& \qquad \qquad - (-1)^{\|p_2\|} \dec(p_2) \beta^{-1} \partial( \beta \dec(p_1) \alpha^{-1}) \Big) \\
& =\partial( \dec(p_2) ) \dec(p_1) \alpha^{-1} + (-1)^{\|p_2\|} \dec(p_2) \partial(\dec(p_1)) \alpha^{-1} \\
& \qquad - \sum_{\beta \in A} \Big( \partial(\dec(p_2) \beta^{-1}) \beta \dec(p_1) \alpha^{-1} \\
& \qquad \qquad - (-1)^{\|p_2\|} \dec(p_2) \beta^{-1} \partial( \beta \dec(p_1)) \alpha^{-1} \Big) \\
& \qquad + (-1)^{\| p \|} \dec(p_2) \dec(p_1) \alpha^* + (-1)^{\|p_2\|} \dec(p_2) \dec( d p_1 / \alpha ) \\
& \qquad - \sum_{\beta \in A} \Big( (-1)^{\| p \|} \dec(p_2) \beta^{-1} \beta \dec(p_1) \alpha^* \\
& \qquad \qquad + (-1)^{\| p_2 \|} \dec{p_2} \beta^{-1} \beta \dec( d p_1 / \alpha) \Big) \\
& = \partial( \dec(p)) \alpha^{-1} + (-1)^{\| p \|} \dec(p) \alpha^* + (-1)^{\|p_2\|} \dec(\underbrace{p_2 (d p_1) / \alpha }_{= (-1)^{\| p_2 \|} dp / \alpha}) \\
& = \partial( \dec(p)) \alpha^{-1} + (-1)^{\| p \|} \dec(p) \alpha^* + (-1)^{\|p_2\|} \dec (d p) / \alpha ) \qedhere
\end{align*}
\end{proof}

\begin{proposition}
$\partial$ defines a differential, that is $\partial^2 = 0$.
\end{proposition}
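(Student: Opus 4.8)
The plan is to reduce to a check on arrows and then carry it out by cases. First I would note that $\partial^2$ is again a graded derivation of $kM$: differentiating $\partial(x)y+(-1)^{|x|}x\,\partial(y)$ produces two copies of $(-1)^{|x|}\partial(x)\partial(y)$ with opposite signs, so that $\partial^2(xy)=\partial^2(x)y+x\,\partial^2(y)$. Hence it is enough to prove $\partial^2(x)=0$ when $x$ is one of the generating arrows of $M$, i.e.\ an arrow $\varphi\in Q_1\setminus A$, a composition $\alpha\varphi$, a turned arrow $\alpha^*$, an anti-composition $\varphi\alpha^{-1}$, or a mixed arrow $\alpha\varphi\beta^{-1}$. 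Three facts would be used throughout: $d^2=0$ on $kQ$; that $d$ annihilates every arrow of degree $0$ (its differential would sit in homological degree $-1$, which is empty), so in particular $d\alpha=0$ for all $\alpha\in A$; and the preceding lemma, which extends the defining formulas for $\partial$ to arbitrary decorated paths $\dec(p)$ and $\dec(p)\alpha^{-1}$, and which applies verbatim to $\dec(\alpha\,p)$ for $\alpha\in A$ since prepending an arrow on the left does not change which arrow a path begins with.

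The cases $x=\alpha^*,\varphi,\alpha\varphi$ I expect to be routine. For $\alpha^*$ there is nothing to do. For $\varphi$ with $\ta\varphi\neq i$ the lemma gives $\partial^2(\varphi)=\partial(\dec\red(d\varphi))=\dec\red\bigl(d\,\red(d\varphi)\bigr)$; writing $d\varphi=\red(d\varphi)+\sum_\beta(d\varphi/\beta)\beta$ and using $d^2=0$ and $d\beta=0$ gives $d\,\red(d\varphi)=-\sum_\beta d(d\varphi/\beta)\,\beta$, every term of which begins with an arrow of $A$ and is annihilated by $\red$. When $\ta\varphi=i$ one differentiates in addition the summand $\sum_\alpha\alpha^*\alpha\varphi$ of $\partial(\varphi)$; using $\partial(\alpha^*)=0$ and $\dec(\alpha\,p)=\alpha\dec(p)$ (no $\Delta$ is inserted when leaving $i$ along an arrow of $A$) it produces exactly the term $\sum_\alpha\alpha^*\alpha\dec\red(d\varphi)$ thrown off by the $\ta p=i$ branch of the lemma, and the two cancel, leaving again $\dec\red(d\,\red(d\varphi))=0$. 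The case $x=\alpha\varphi$ is the same computation applied to $p=\alpha\,\red(d\varphi)$.

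The genuine work is in $x=\varphi\alpha^{-1}$ and $x=\alpha\varphi\beta^{-1}$. For $\varphi\alpha^{-1}$ I would differentiate
\[
\partial(\varphi\alpha^{-1})=\partial(\varphi)\alpha^{-1}+(-1)^{\|\varphi\|}\varphi\alpha^*-(-1)^{\ci{\ta\varphi}}\dec(d\varphi/\alpha),
\]
apply the second formula of the lemma to the first summand (using $\|\red(d\varphi)\|=\|\varphi\|-1$), and observe three things: $\partial^2(\varphi)\alpha^{-1}$ vanishes by the previous step, the term built from $\partial(\alpha^*)$ vanishes, and the two occurrences of $\partial(\varphi)\alpha^*$ carry the opposite signs $(-1)^{\|\varphi\|-1}$ and $(-1)^{\|\varphi\|}$ and cancel. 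What remains is $-(-1)^{\ci{\ta\varphi}}\dec\bigl[\bigl(d\,\red(d\varphi)\bigr)/\alpha+\red\bigl(d(d\varphi/\alpha)\bigr)\bigr]$, and it vanishes because $\bigl(d\,\red(d\varphi)\bigr)/\alpha=-d(d\varphi/\alpha)=-\red\bigl(d(d\varphi/\alpha)\bigr)$ --- the first equality from $d^2\varphi=0$ and $d\beta=0$, the last because $d\varphi/\alpha$ starts at the vertex $\ta\alpha\neq i$, so no derivative of it can begin with an arrow of $A$. The loop sub-case $\ta\varphi=i$ is handled the same way, now through the $\ta p=i$ branches of both formulas of the lemma, and $x=\alpha\varphi\beta^{-1}$ is formally identical, with $\partial(\alpha\varphi)$ in the role of $\partial(\varphi)$ and a passive prefactor $\alpha$ carried through the whole computation.

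I expect the sign bookkeeping in these last two cases to be the main obstacle --- keeping straight $(-1)^{\|\varphi\|}$, $(-1)^{\ci{\ta\varphi}}$, and the identity $\|\varphi\|=|\varphi|-\ci{\st\varphi}+\ci{\ta\varphi}$ --- together with inserting the decoration $\dec$ in exactly the right places in the $\ta\varphi=i$ sub-cases; beyond the lemma, $d^2=0$, and the degree argument that kills $d\alpha$, no new idea should be needed.
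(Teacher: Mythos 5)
Your proposal is correct and follows essentially the same route as the paper's proof: reduce to generators via the derivation property of $\partial^2$, invoke the preceding lemma to push the defining formulas through to the paths $\dec(\red(d\varphi))$ and $\dec(p)\alpha^{-1}$, and close each case with the identity $0=d^2\varphi=d(\red(d\varphi))+\sum_{\beta}d\bigl((d\varphi)/\beta\bigr)\beta$ together with $d\beta=0$ for $\beta\in A$. The cancellations you isolate (the $\sum\alpha^*\alpha\,\dec\red(d\varphi)$ terms, and the two copies of $\partial(\varphi)\alpha^*$ with signs $(-1)^{\|\varphi\|-1}$ and $(-1)^{\|\varphi\|}$) are exactly the ones the paper uses, and you are in fact slightly more explicit than the paper about why $\red$ may be dropped from $d\bigl((d\varphi)/\alpha\bigr)$.
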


\begin{proof}
Since $\partial$ follows the Leibnitz rule, it suffices to check that $\partial^2$ vanishes on arrows.

For arrows $\varphi$ coming from the original quiver with $\ta{\varphi} = i$ we have
\begin{align*}
\partial^2(\varphi) & = \partial(\sum_{\alpha \in A} \alpha^* \alpha \varphi - \dec( \red( d \varphi))) \\
& = \sum_{\alpha \in A} \alpha^* \alpha \dec(\red(\varphi)) - \sum_{\alpha \in A} \alpha^* \alpha \dec(\red(d \varphi)) + \dec( \red( d (\red( d \varphi)))) \\
& = \dec( \red( d (\red( d \varphi))))
\end{align*}
Clearly this also holds when $\ta{\varphi} \neq i$.

Now we observe that $\red( d \varphi)$ and $d \varphi$ only differ by a linear combination of paths starting in arrows in $A$. Therefore $d( \red( d \varphi))$ also is a linear combination of such paths, and the above vanishes.

For arrows of the form $\alpha \varphi$ the same argument works.

For the arrows $\alpha^*$ we already have $\partial(\alpha^*) = 0$.

For arrows $\varphi \alpha^{-1}$ we calculate
\[ \partial^2(\varphi \alpha^{-1}) = \partial\Big( \partial(\varphi) \alpha^{-1} + (-1)^{\| \varphi \|} \varphi \alpha^* - (-1)^{\ci{ \ta{\varphi}}} \dec( (d \varphi) / \alpha) \Big) \]
Similar to the calculation for $\partial^2(\varphi)$ above one can check that
\[ \partial ( \partial( \varphi) \alpha^{-1} ) = (-1)^{\| \partial(\varphi) \|}  \partial( \varphi ) \alpha^* - (-1)^{\ci{ \ta{\varphi}}} \dec(( d( \red (d \varphi) )) / \alpha ). \]
Thus we obtain
\begin{align*}
\partial^2(\varphi \alpha^{-1}) & = - (-1)^{\ci{ \ta{\varphi}}} ( \partial( \dec( (d \varphi) / \alpha)) + \dec(( d( \red (d \varphi) )) / \alpha ) ) \\
& = - (-1)^{\ci{ \ta{\varphi}}} \dec\Big( d( (d \varphi) / \alpha) + (d( \red ( d \varphi))) / \alpha \Big)
\end{align*}
Finally we note that
\[ 0 = d^2 \varphi = d(\red( d \varphi)) + \sum_{\alpha \in A} d( (d \varphi) / \alpha \; \alpha ). \]
Restricting to paths starting with $\alpha$ this gives that $\partial^2( \varphi \alpha^{-1}) = 0$.

Again the proof for arrows $\alpha \varphi \beta^{-1}$ is the same calculation.
\end{proof}


\begin{thebibliography}{9}

\bibitem{aihara-iyama}
 Takuma Aihara and Osamu Iyama, \emph{Silting mutation in triangulated categories}, J.\ Lond.\ Math.\ Soc.\ (2) \textbf{85} (2012), no.\ 3, 633-–668.

\bibitem{buan-reiten-thomas}
Aslak Bakke Buan, Idun Reiten, and Hugh Thomas, \emph{Three kinds of mutation}, J.\ Algebra \textbf{339} (2011), 97-–113. 

\bibitem{derksen-weyman-zelevinsky}
 Harm Derksen, Jerzy Weyman, and Andrei Zelevinsky, \emph{Quivers with potentials and their representations. I. Mutations}, Selecta Math.\ (N.S.) \textbf{14} (2008), no.\ 1, 59-–119.

\bibitem{ginzburg}
 Victor Ginzburg, \emph{Calabi-Yau algebras}, arXiv:math.AG/0612139.

\bibitem{keller-derived-morita}
 Bernhard Keller, \emph{On the construction of triangle equivalences}, Derived equivalences for group rings, 155–-176, Lecture Notes in Math. \textbf{1685}, Springer, Berlin, 1998. 

\bibitem{keller-vossieck}
 Bernhard Keller and Dieter Vossieck, \emph{Aisles in derived categories}, Bull.\ Soc.\ Math.\ Belg.\ S{\'e}r.\ A \textbf{40} (1988), no.\ 2, 239-–253.

\bibitem{keller-yang}
 Bernhard Keller and Dong Yang, \emph{Derived equivalences from mutations of quivers with potential}, Adv.\ Math.\ \textbf{226} (2011), no.\ 3, 2118--2168.

\bibitem{van_den_bergh}
 Michel Van den Bergh, \emph{Calabi-Yau algebras and superpotentials}, to appear in Selecta Mathematica.

\end{thebibliography}
\end{document}